\def\date{16 Sep 2014}
\newtheorem{proposition}{Proposition}[section]
\newtheorem{lemma}[proposition]{Lemma}
\newtheorem{corollary}[proposition]{Corollary}
\newtheorem{theorem}[proposition]{Theorem}
\newtheorem{claim}[proposition]{Claim}
\newtheorem{question}[proposition]{Question}
\newtheorem{thm}[proposition]{Theorem}
\theoremstyle{definition}
\newtheorem{definition}[proposition]{Definition}
\newcommand{\ch}{{\rm ch}}
\begin{document}
\font\smallrm=cmr8




\baselineskip=12pt
\phantom{a}\vskip .25in
\centerline{{\bf  On the Minimum Edge-Density of 4-Critical Graphs of Girth Five}}
\vskip.4in
\centerline{{\bf Chun-Hung Liu}
\footnote{\texttt{chliu@math.princeton.edu.}}}
\smallskip
\centerline{Department of Mathematics}
\centerline{Princeton University}
\centerline{Princeton, New Jersey, USA}
\vskip.25in

\centerline{{\bf Luke Postle}
\footnote{\texttt{lpostle@uwaterloo.ca. Partially supported by NSERC under Discovery Grant No. 2014-06162}}} 
\smallskip
\centerline{Department of Combinatorics and Optimization}
\centerline{University of Waterloo}
\centerline{Waterloo, Ontario, Canada}

\vskip 1in \centerline{\bf ABSTRACT}
\bigskip

{
\parshape=1.0truein 5.5truein
\noindent

We prove that if $G$ is a $4$-critical graph of girth at least five then $|E(G)| \ge \frac{5|V(G)|+2}{3}$. As a corollary, graphs of girth at least five embeddable in the Klein bottle or torus are $3$-colorable. These are results of Thomas and Walls, and Thomassen respectively. The proof uses the new potential technique developed by Kostochka and Yancey who proved that $4$-critical graphs satisfy: $|E(G)|\ge \frac{5|V(G)|-2}{3}$.
}

\vfill \baselineskip 11pt \noindent \date.
\vfil\eject
\baselineskip 18pt

\section{Introduction}
Intuitively, a graph that has fewers edges can be properly colored by a smaller number of colors.
Kostochka and Yancey~\cite{KY2} confirmed this intuition recently by proving that every non-$3$-colorable graph has large edge density.
We say that a graph is {\it $4$-critical} if it is not $3$-colorable, but all of its proper subgraphs are.

\begin{theorem}[Kostochka, Yancey \cite{KY2}] \label{Ore4}
If $G$ is a $4$-critical graph on $n$ vertices, then $$|E(G)|\ge \frac{5n-2}{3}$$
\end{theorem}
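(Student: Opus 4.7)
The plan is to argue via a minimum counterexample using a \emph{potential function}. For any graph $H$, set $\rho(H) = 5|V(H)| - 3|E(H)|$. The inequality to prove is equivalent to $\rho(G) \le 2$ for every $4$-critical $G$. The base case $G = K_4$ is immediate since $\rho(K_4) = 20 - 18 = 2$. Suppose for contradiction that a counterexample exists and let $G$ be one of minimum order; since $\rho$ is an integer, $\rho(G) \ge 3$. From the criticality of $G$ we also have $\delta(G) \ge 3$ and that $G$ is $2$-connected.

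The heart of the proof is a \emph{potential lemma}: for every nonempty $R \subsetneq V(G)$ with $|R| \ge 2$, the induced subgraph $G[R]$ satisfies $\rho(G[R]) \ge c$ for an appropriate constant $c$. I would prove this by exploiting the fact that $G[R]$ is $3$-colorable (being a proper subgraph of a $4$-critical graph): given a $3$-coloring $\phi$ of $G[R]$, identify the vertices of $R$ that receive the same color, obtaining a graph $G'$ on $|V(G)| - |R| + 3$ vertices. Any proper $3$-coloring of $G'$ would lift through $\phi$ to a proper $3$-coloring of $G$, so $G'$ is not $3$-colorable and therefore contains a $4$-critical subgraph $G^*$. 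Applying the minimality hypothesis to $G^*$ yields a lower bound on $|E(G^*)|$, which after accounting for the edges internal to $R$ (those lost in the identification) translates into the desired lower bound on $\rho(G[R])$, after optimizing over the choice of $\phi$.

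Armed with the potential lemma, I would analyze local structure at a vertex $v$ of minimum degree. When $d(v) = 3$ with neighbors $u_1,u_2,u_3$, take a $3$-coloring of $G - v$; the $u_i$ must receive three distinct colors, and criticality of $G$ together with Kempe-chain swaps forces each pair $u_i,u_j$ to be joined in $G - v$ by an alternating path in the two colors it uses. Iterating, one locates a subgraph $H \ni v$ whose structure is severely constrained, essentially a Gallai tree of cliques and odd cycles. Combining the edge density of such an $H$ with the potential lemma applied to $V(G) \setminus V(H)$ (or to the complement of some extension of $H$), a careful counting forces $\rho(G) \le 2$, contradicting $\rho(G) \ge 3$.

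The main obstacle will be calibrating the potential lemma: one must choose the constant $c$ tight enough that the local analysis at low-degree vertices yields a numerical contradiction, yet loose enough that the collapse-and-induct argument actually goes through. In particular, the identification step can create multi-edges or destroy criticality in delicate ways, and edge cases in which $|R|$ is small, $G[R]$ is very dense, or the coloring $\phi$ has an unusually unbalanced color class all require separate treatment. Once the potential lemma is calibrated correctly, the remainder is a careful bookkeeping exercise combining Kempe chains, Gallai-tree structure, and the edge counts supplied by the lemma.
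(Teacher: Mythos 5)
The paper does not prove this theorem; it cites it from Kostochka--Yancey \cite{KY2}, and the paper's own machinery (Sections~3--5) is a strengthening of the same method, so I compare your sketch to that.

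Your core plan --- minimum counterexample, the potential $\rho(H)=5|V(H)|-3|E(H)|$, and the ``color $G[R]$, identify color classes, extract a $4$-critical subgraph of the quotient'' step (which is exactly Proposition~\ref{Phi} and the critical-extension machinery of Lemma~\ref{Extension}) --- is the Kostochka--Yancey approach, and you have correctly isolated its two novel ideas. However, the finish you propose has a genuine gap. First, the potential lemma you want is not $\rho(G[R])\ge c$ for a fixed absolute $c$; the bound that actually closes the induction is \emph{relative} to the counterexample, of the form $\rho(G[R])\ge \rho(G)+2$ (and $\ge \rho(G)+4$ or $+5$ under extra structure), as in Claims~\ref{ExtDec} and~\ref{Pot4}. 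An absolute constant cannot feed back into ``$\rho(G)\ge 3$'' the way the argument needs. Second, and more importantly, the endgame is not a Kempe-chain / Gallai-tree analysis at a minimum-degree vertex combined with an edge count of a local subgraph $H$. While Gallai's theorem does constrain $D_3(G)$, that alone does not yield a numerical contradiction, and Kempe chains are not used at all. What the proof actually does is: (i) use the relative potential lemma to rule out specific small configurations in a minimum counterexample (no $K_4-e$, no identifiable pair, no long path of degree-$3$ vertices, bounded components of $D_3(G)$), and then (ii) run a \emph{global discharging argument}, assigning charge $5-\tfrac{3}{2}d(v)$ to each vertex (so that $\sum_v \ch(v)$ computes $\rho(G)$ or a variant), redistributing charge along $D_3(G)$, and showing $\sum_v \ch^*(v)\le 0$. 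The discharging step is the bridge from the structural claims to the final inequality, and your sketch has no substitute for it; ``combining the edge density of $H$ with the potential lemma applied to $V(G)\setminus V(H)$'' will not produce the required bound because a single local subgraph does not control the global vertex/edge ratio. You would need to either reinvent the discharging phase or find some other global averaging argument over all low-degree vertices.
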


An immediated corollary of this theorem is that every graph of girth at least five that can be embedded in the plane or the projective plane is $3$-colorable. In turn, that statement when combined with a simple argument for identifying vertices on facial 4-cycles implies Gr\"{o}tzsch's Theorem~\cite{Grotzsch}: every triangle-free planar graph is $3$-colorable. In fact, Borodin et al.~\cite{Borodin} outline a few more applications of Theorem~\ref{Ore4} such as Aksenov's Theorem~\cite{Aksenov} that a planar graph with at most three triangles is $3$-colorable.

Furthermore, the bound in Theorem~\ref{Ore4} is tight since it is attained by infinitely many $4$-critical graphs. 
In fact, in a subsequent paper~\cite{KY3}, Kostochka and Yancey characterized the $4$-critical graphs that attain these bounds: $G$ is $4$-critical and $|E(G)|= \frac{5n-2}{3}$ if and only if $G$ is a ``$4$-Ore" graph (defined later).

It is well-known that graphs of large girth can have large chromatic number. However, Gr\"{o}tzsch's Theorem shows that the chromatic number of graphs of large girth can be reduced if we add a topological condition. A natural question is whether Gr\"{o}tzsch's Theorem extends to surfaces of larger genus. 
Unfortunately, this is not true for triangle-free graphs as Gimble and Thomassen~\cite{Gimble} showed that there exist triangle-free 4-critical graphs embeddable in the projective plane. Still, Thomassen~\cite{Thom} showed that projective planar and toroidal graphs of girth five are $3$-colorable.

\begin{thm}[Thomassen \cite{Thom}] \label{torus thm}
Every graph of girth at least five embeddable in the torus or the projective plane is $3$-colorable.
\end{thm}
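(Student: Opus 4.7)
\medskip

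\noindent\textbf{Proof proposal.} The plan is to reduce Theorem~\ref{torus thm} to the main edge-density inequality advertised in the abstract, which strengthens Theorem~\ref{Ore4} to
\[
|E(G)| \ge \frac{5|V(G)|+2}{3}
\]
for every $4$-critical graph $G$ of girth at least five; once this is in hand, the topological conclusion is a short Euler-formula computation.

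Suppose for contradiction that $G$ has girth at least five, embeds in the torus or projective plane, and is not $3$-colourable. Pass to a vertex-minimal non-$3$-colourable subgraph $H\subseteq G$; such an $H$ is $4$-critical, inherits girth at least five, and still embeds in the same surface. Let $n=|V(H)|$, $m=|E(H)|$, let $f$ be the number of faces of the embedding, and let $g$ be the Euler genus of the surface. Euler's formula $n-m+f\ge 2-g$ together with $5f\le 2m$ (from girth at least five) yields $m \le \frac{5(n-2+g)}{3}$, which is $\frac{5n-5}{3}$ for the projective plane ($g=1$) and $\frac{5n}{3}$ for the torus ($g=2$). Both bounds contradict the strengthened lower bound $m\ge \frac{5n+2}{3}$. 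In fact the projective-plane case already contradicts the weaker Kostochka--Yancey bound $\frac{5n-2}{3}$ of Theorem~\ref{Ore4}; the torus case is what genuinely needs the improved additive constant.

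The only real obstacle is therefore the strengthened inequality itself. I expect the paper to follow the potential-method template of Kostochka and Yancey: define a potential such as $\varphi(G)=5|V(G)|-3|E(G)|$ (or a suitable refinement that distinguishes girth-five $4$-critical graphs), examine a putative counterexample of minimum potential, and systematically rule out reducible configurations by extending $3$-colourings of smaller graphs obtained via vertex identifications, deletions, or edge contractions. The delicate aspect will be choosing reductions that preserve girth at least five---identifying two vertices can easily create a triangle or a $4$-cycle---so one must either avoid such identifications or argue that any resulting short cycle produces an even more reducible situation. A secondary subtlety is the additive gap: the target $+2$ is only $4/3$ better than the $-2$ of Theorem~\ref{Ore4}, so the discharging or potential-exchange argument has relatively little slack and must be set up carefully around the extremal $4$-Ore configurations, all of which fortunately contain triangles and are thereby excluded by the girth hypothesis.
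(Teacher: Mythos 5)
You are right that the paper does not reprove Theorem~\ref{torus thm} from scratch: it is stated as a cited result of Thomassen, and the authors' own contribution in this direction is Corollary~1.6, which is phrased more cautiously as ``Every graph of girth at least five embeddable in the torus or Klein bottle \emph{such that all faces have size at least five} is $3$-colorable.'' Your reduction---pass to a $4$-critical subgraph, invoke Theorem~\ref{weak main}, then apply Euler's formula---is exactly the intended derivation, your algebra $m \le \tfrac{5(n+g-2)}{3}$ is correct, and you correctly observe that the projective-plane case already falls to Kostochka--Yancey while the torus case genuinely requires the $+2$.

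However, there is a genuine gap in the step ``$5f \le 2m$ (from girth at least five).'' That inequality requires every \emph{facial walk} of the embedding to have length at least five, not merely that the graph has girth at least five. A $4$-critical graph is $2$-connected, and in the plane or projective plane this does force all facial walks to be cycles; but for cellular embeddings on the torus (or Klein bottle), facial walks of a $2$-connected graph can traverse an edge twice and thus be shorter than the girth. This is precisely why the paper's Corollary~1.6 carries the extra hypothesis ``such that all faces have size at least five.'' To actually recover Thomassen's theorem in full one must close this hole, e.g.\ by reducing to an embedding of representativity (face-width) at least~$2$---where all faces of a $2$-connected graph are bounded by cycles---or by arguing directly that a short facial walk forces either a reduction to a simpler surface or a shorter cycle, contradicting girth five. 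As written your proposal silently assumes the very property the paper declines to assume, so the derivation of Theorem~\ref{torus thm} is incomplete even granting Theorem~\ref{weak main}.
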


On the other hand, Thomas and Walls~\cite{KB} then proved the same result for the Klein bottle in a lengthier paper.
 
\begin{thm}[Thomas and Walls \cite{KB}] \label{KB thm}
Every graph of girth at least five embeddable in the Klein bottle is $3$-colorable.
\end{thm}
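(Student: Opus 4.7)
The plan is to derive Theorem \ref{KB thm} directly from the paper's main edge-density inequality, stated in the abstract: every $4$-critical graph $G$ of girth at least five satisfies $|E(G)| \ge \frac{5|V(G)|+2}{3}$. Suppose for contradiction that there is a non-$3$-colorable graph $G$ of girth at least five that embeds in the Klein bottle. Let $H$ be a $4$-critical subgraph of $G$. Then $H$ inherits the girth condition, inherits the Klein bottle embedding, and is connected, since a graph whose components are all $3$-colorable is itself $3$-colorable and hence no $4$-critical graph is disconnected.

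Write $n = |V(H)|$ and $e = |E(H)|$. Because $H$ is connected, it admits a $2$-cell embedding in some surface $S$ whose Euler genus is at most that of the Klein bottle; in particular $\chi(S) \ge 0$. Let $f$ denote the number of faces of this embedding. Euler's formula gives $n - e + f = \chi(S) \ge 0$, and the girth-five hypothesis yields $2e \ge 5f$. Eliminating $f$ between these two relations gives $e \le 5(n - \chi(S))/3 \le 5n/3$. Combined with the main theorem's bound $e \ge (5n+2)/3$, this is a contradiction because $(5n+2)/3 > 5n/3$.

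The real work is not this corollary but the edge-density bound itself; once that bound is in hand, the reduction above is a two-line application of Euler's formula. The only step worth flagging is the passage from the hypothesized embedding in the Klein bottle to a $2$-cell embedding in a surface of no greater Euler genus, which is a standard fact for connected graphs. The identical argument also recovers Theorem \ref{torus thm}, since the torus has the same Euler characteristic as the Klein bottle and the projective plane has strictly larger Euler characteristic, so both surfaces satisfy $\chi(S) \ge 0$.
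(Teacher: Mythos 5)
Your derivation is correct and follows precisely the route the paper intends: Theorem~\ref{KB thm} is attributed to Thomas and Walls, but the abstract and the discussion surrounding Theorem~\ref{weak main} present it (together with Theorem~\ref{torus thm}) as a corollary of the girth-five edge-density bound via Euler's formula, which is exactly what you carry out. The one step worth spelling out is that $2e \ge 5f$ requires each face of the $2$-cell embedding to have degree at least five, which is not literally the girth hypothesis; since $H$ is $4$-critical it has minimum degree at least three, so no face boundary walk can backtrack, and hence every face boundary contains a cycle of length at least the girth, which is the hypothesis the paper's displayed Corollary after Theorem~\ref{weak main} includes explicitly (``all faces have size at least five'') but which your argument correctly shows is automatic.
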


One may wonder if the topological requirement in these theorems could be replaced by a sparsity condition as in Theorem~\ref{Ore4}. Our main result does just that, improving Kostochka and Yancey's bound for graphs of girth five and thereby generalizing Theorems \ref{torus thm} and \ref{KB thm}. 

\begin{thm} \label{weak main}
If $G$ is a $4$-critical graph of girth at least five, then $|E(G)|\ge \frac{5|V(G)|+2}{3}$.
\end{thm}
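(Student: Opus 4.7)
The plan is to adapt the potential technique of Kostochka and Yancey used to prove Theorem~\ref{Ore4}. Set $p(H) = 5|V(H)| - 3|E(H)|$ for any subgraph $H$ of $G$; Theorem~\ref{Ore4} says $p(G) \le 2$ for every 4-critical graph $G$, and the target here is the strictly sharper bound $p(G) \le -2$ under the girth-at-least-$5$ hypothesis.

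I would argue by contradiction with a vertex-minimum 4-critical graph $G$ of girth at least $5$ satisfying $p(G) \ge -1$. Criticality forces $\delta(G) \ge 3$ and, for each $v \in V(G)$, the existence of a 3-coloring of $G-v$ that does not extend to $v$. Since $G$ has girth at least $5$, any two neighbors of a fixed vertex have no other common neighbor, which is the geometric ingredient I will exploit repeatedly.

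The core step is to prove a family of lower bounds on $p(W)$ for proper subsets $W \subsetneq V(G)$ of size at least some fixed constant. Given such a $W$ and a 3-coloring of $G[V(G) \setminus W]$ that fails to extend, I would build a strictly smaller graph $G^*$ by replacing $V(G) \setminus W$ with a carefully designed ``color-forcing gadget'' that (i) is 4-critical, (ii) has girth at least $5$, and (iii) encodes exactly the obstruction coming from the removed part. Minimality applied to $G^*$ then gives $p(G^*) \le -2$, which translates back into a lower bound on $p(W)$. With these subset estimates in hand, I would deduce structural restrictions on the distribution of 3-vertices in $G$ (for instance, separations between them and limits on how many 3-vertices a bounded-radius neighborhood can contain), and finally a discharging argument redistributing charge from higher-degree vertices to nearby 3-vertices would force $3|E(G)| \ge 5|V(G)| + 2$, the desired contradiction.

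The main obstacle is the gadget construction. Identifying two non-adjacent vertices of a girth-$5$ graph easily creates $3$- or $4$-cycles coming from $5$- and $6$-cycles of $G$, so naive contraction/identification reductions destroy the girth hypothesis. Designing reductions that simultaneously preserve girth at least $5$ and transfer the complete list of forbidden boundary colorings into the reduced graph will almost certainly require a refinement of the Kostochka--Yancey analysis of near-extremal $4$-critical graphs, in order to rule out the short-cycle configurations that could otherwise appear in $G^*$; this is where I expect the bulk of the technical work to lie.
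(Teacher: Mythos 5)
You have correctly identified the right family of techniques (Kostochka--Yancey potential with subset lower bounds obtained from reductions, followed by discharging), and you have also correctly put your finger on the central difficulty: the $\phi$-identification necessarily adds a triangle and can convert $5$- and $6$-cycles of $G$ into shorter cycles, so the reduced graph leaves the class of girth-$\ge 5$ graphs. However, your proposed remedy --- a girth-preserving ``color-forcing gadget'' that still encodes exactly the list of obstructed boundary colorings --- is where the plan breaks down, and it is not the route the paper takes. Encoding an arbitrary set of forbidden $3$-colorings of a boundary with a $4$-critical, girth-$\ge 5$ gadget is not something that is known to be possible, and there is no reason to expect the near-extremal analysis of Kostochka--Yancey to produce one; the triangle that the identification introduces is, in a sense, unavoidable if you want the reduction to be this crisp.

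The paper sidesteps this entirely by a different (and, in hindsight, much cleaner) device: rather than restricting the class of graphs to those of girth at least $5$, it keeps the class of \emph{all} $4$-critical graphs and modifies the \emph{potential}, setting $p(G) = 5|V(G)| - 3|E(G)| - T(G)$ where $T(G)$ is the maximum number of vertex-disjoint cycles of length at most $4$. The main technical theorem (Theorem~\ref{Main}) is then proved for all $4$-critical $G$, with a short finite list of ``exceptional'' graphs of potential $\ge -1$, every one of which contains a triangle. The short cycles that are unavoidably created by $\phi$-identification are no longer a problem, because they are accounted for by the $-T(\cdot)$ term: the crucial inequality in Lemma~\ref{Extension} carries an explicit correction $T(W) - T(W \setminus X)$, and Propositions~\ref{VertexAndTriangle}--\ref{4OreT3} supply the combinatorial facts about triangles in $4$-Ore graphs needed to control that correction. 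Finally, for a girth-$\ge 5$ graph one has $T(G) = 0$, so it cannot be exceptional, and $p(G) \le -2$ collapses to $5|V(G)| - 3|E(G)| \le -2$, i.e.\ $|E(G)| \ge \frac{5|V(G)|+2}{3}$. In short: the missing idea in your proposal is to strengthen the induction hypothesis by charging for short cycles in the potential, rather than trying to force the reductions to stay inside the girth-$5$ class.
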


\begin{corollary}
Every graph of girth at least five embeddable in the torus or Klein bottle such that all faces have size at least five is $3$-colorable.
\end{corollary}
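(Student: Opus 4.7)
The plan is to deduce this corollary from Theorem~\ref{weak main} via a standard Euler-formula argument, in direct analogy with the deduction of Gr\"otzsch-type $3$-colorability of girth-five planar and projective-planar graphs from Theorem~\ref{Ore4}. Suppose for contradiction that $G$ is a girth-at-least-five graph embedded in the torus or Klein bottle with every face of length at least five, and that $G$ is not $3$-colorable. Then $G$ contains a $4$-critical subgraph $H$, and I consider $H$ with the embedding inherited from $G$ in the same surface.

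The first step is to check that the inherited embedding of $H$ still has every face of length at least five. Since $H$ is $4$-critical it is $2$-edge-connected (any cut edge would allow one to $3$-color the two sides separately and then combine the colorings), and it inherits girth at least five from $G$. A short case analysis then shows that a face boundary of length less than five must either trace a cycle of length less than five in $H$ (contradicting the girth condition) or repeat an edge (impossible in a $2$-edge-connected embedded graph, where each edge lies on the boundary of two distinct faces).

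Having established this, I would apply Euler's formula: the torus and the Klein bottle both have Euler characteristic zero, so $|V(H)| - |E(H)| + |F(H)| \ge 0$. Double-counting edge-face incidences against the face-length bound gives $2|E(H)| \ge 5|F(H)|$, and substituting yields
\[
|V(H)| - |E(H)| + \frac{2|E(H)|}{5} \ge 0,
\]
which rearranges to $|E(H)| \le \frac{5|V(H)|}{3}$. But Theorem~\ref{weak main} applied to the $4$-critical graph $H$ gives $|E(H)| \ge \frac{5|V(H)|+2}{3} > \frac{5|V(H)|}{3}$, a contradiction.

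The entire content of the corollary lies in Theorem~\ref{weak main}; the Euler computation is essentially a single line, and the only point requiring any care is the verification that the face-length condition survives the passage to the $4$-critical subgraph, which is an elementary check using $2$-edge-connectivity and the girth hypothesis. I do not expect any real obstacle here, and I would not attempt any alternative route (for instance, applying Euler to $G$ directly only yields $|E(G)| \le 5|V(G)|/3$, and the ``$+2$'' slack in Theorem~\ref{weak main} is too small to produce a contradiction against $H \subsetneq G$ by itself).
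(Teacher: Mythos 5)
The paper states this corollary without proof, treating it as a routine consequence of Theorem~\ref{weak main} via Euler's formula, and your overall strategy (pass to a $4$-critical subgraph $H$, verify that its faces still have length at least five, apply Euler's formula with double counting, contradict the density bound) is exactly the intended one.

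There is one step, however, where the justification you give is not quite right. You claim that in a $2$-edge-connected graph embedded in a surface, each edge lies on the boundary of two distinct faces. This is Whitney's theorem in the plane, but it is false for surfaces of higher genus: for example, a theta graph can be cellularly embedded on the torus with a single face, so that every edge lies on the boundary of only one face. So ``the face boundary repeats an edge'' cannot be excluded on $2$-edge-connectivity grounds alone. Fortunately the conclusion you need still holds, and the correct justification is slightly different: since $H$ is $4$-critical it has minimum degree at least three, and in a cellular embedding of a graph with minimum degree at least two no facial walk ever immediately backtracks (at a vertex of degree $\ge 2$ the rotation always continues to a different edge). A short case check then shows that a non-backtracking closed walk of length $2$, $3$, or $4$ in a simple graph must be a cycle, which contradicts girth at least five. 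One should also note, as a technical point, that the embedding $H$ inherits from $G$ need not be cellular; the clean fix is to take a cellular embedding of $H$ in a surface of Euler genus at most $2$ (which exists since $H$ is a subgraph of $G$) and run the Euler argument there. With these two repairs your proof is correct.
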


In fact we prove a more technical but stronger theorem which considers small ``exceptional" 4-critical graphs. One such necessary class is $4$-Ore graphs which we now define:

\begin{definition}
An \emph{Ore-composition} of graphs $G_1$ and $G_2$ is a graph obtained by the following procedure:
\begin{enumerate}
\item delete an edge $xy$ from $G_1$;
\item split some vertex $z$ of $G_2$ into two vertices $z_1$ and $z_2$ of positive degree;
\item identify $x$ with $z_1$ and identify $y$ with $z_2$.
\end{enumerate}
We say that $G_1$ is the \emph{edge-side} and $G_2$ the \emph{split-side} of the composition. Furthermore, we say that $xy$ is the \emph{replaced edge} of $G_1$ and that $z$ is the \emph{split vertex} of $G_2$.
We say that $G$ is a \emph{$k$-Ore graph} if it can be obtained from copies of $K_k$ and repeated Ore-compositions.
\end{definition}

In this paper, $T(G)$ denotes the maximum number of vertex-disjoint cycles of size at most four in a graph $G$.

\begin{thm}\label{Main}
Let $p(G) = 5|V(G)|-3|E(G)|-T(G)$. If $G$ is a $4$-critical graph, then 

\begin{enumerate}
\item $p(G)= +1$ if $G=K_4$,
\item $p(G)= 0$ if $G=H_7$,
\item $p(G)= -1$ if $G=W_5, T_8, T_{11}$ or $G$ is $4$-Ore with $T(G)=3$,
\item $p(G)\le -2$ otherwise, 
\end{enumerate}
where $H_7$ is the $4$-Ore graph on seven vertices, $W_5$ is the graph obtained from the $5$-cycle by adding one vertex adjacent to all other vertices, and $T_8$ and $T_{11}$ are the graphs depicted in Figure~\ref{GraphT}.
\end{thm}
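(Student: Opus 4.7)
The plan is to proceed by strong induction on $|V(G)|+|E(G)|$, treating items (1)--(3) as verifiable base cases and attacking item (4) through a minimum counterexample argument. For the base cases, one computes $p$ directly: $p(K_4)=+1$ since $T(K_4)=1$; for $H_7$, obtained as the Ore-composition of two copies of $K_4$, one finds $|V(H_7)|=7$, $|E(H_7)|=11$, and $T(H_7)=2$ (one triangle from each ``side'' of the composition), so $p(H_7)=0$; the values for $W_5$, $T_8$, and $T_{11}$ are checked by inspection using the figure; and a short inductive argument along any Ore-composition sequence, tracking the changes in $|V|$, $|E|$, and $T$ at each step, establishes $p(G)=-1$ for every 4-Ore graph with $T(G)=3$.

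For item (4), suppose toward contradiction that $G$ is a 4-critical graph not appearing in items (1)--(3) with $p(G)\ge -1$, and choose such a $G$ minimizing $|V(G)|+|E(G)|$. Then $\delta(G)\ge 3$ by 4-criticality, and $G-e$ is 3-colorable for every edge $e$. The overall strategy is to adapt the potential technique of Kostochka and Yancey~\cite{KY2} to the girth-sensitive potential $p$: I would locate a proper subgraph $H\subsetneq G$ that is either 4-critical or extends to a 4-critical graph by a small local identification, invoke the inductive hypothesis on $H$ to conclude $p(H)\le -2$ (since $H$ necessarily lies outside items (1)--(3)), and then show via a careful accounting of the vertices, edges, and short cycles in $V(G)\setminus V(H)$ together with the edge-boundary of $H$ that $p(G)\le -2$, contradicting the choice of $G$.

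Carrying this through relies on a toolkit of reducibility lemmas ruling out standard local configurations: adjacent low-degree vertices, short paths of degree-$3$ vertices, and small vertex separations producing ``critical extensions'' of small potential. Each such configuration is eliminated by extracting an appropriate 4-critical reduced graph, applying induction, and checking that the inequality $p(G)\le -2$ survives the reduction. Particular care is required when the reduced graph could be one of the exceptional graphs of items (1)--(3) --- most notably a 4-Ore graph with $T=3$ --- because then the inductive bound degrades from $-2$ to $-1$, and the missing unit of slack must be recovered from the specific choice of reduction edge or from an extra short cycle of $G$ that is lost in the reduction.

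The hardest part will be controlling the interaction between $T(G)$ and subgraph reduction. Since $T(G)$ counts only vertex-disjoint short cycles, deleting even a single vertex can destroy several members of an optimal disjoint family, while identifying two non-adjacent vertices can create new short cycles; the naive bound $T(G)\ge T(H)$ is often too weak. The key technical step I foresee is a monotonicity lemma bounding $T(G)-T(H)$ in terms of $|V(G)|-|V(H)|$ and the edge-boundary of $H$ for the specific families of subgraphs arising in the reductions, so that every potential comparison yields the same net inequality that powers the Kostochka--Yancey argument. Once that monotonicity is in place, the exceptional graphs in items (1)--(3) emerge as precisely the configurations in which no further reduction can produce the slack required to finish the proof.
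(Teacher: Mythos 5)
Your high-level outline is reasonable and correctly identifies two genuine features of the paper's proof: a minimum-counterexample argument driven by Kostochka--Yancey's potential technique, and the central technical headache of controlling how $T(\cdot)$ (a non-monotone, packing-type quantity) behaves under subgraph extraction and identification. The propositions you foresee as a ``monotonicity lemma'' indeed correspond to what the paper does in Section~2 (behavior of $T$ under Ore-composition and vertex-splitting, Propositions~\ref{TOre}--\ref{SameOrK4e}) and in Lemma~\ref{Extension} (bounding $p_G(R')$ via $T(W)-T(W\setminus X)$). That said, there are two substantive gaps.

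First, and most seriously, your plan has no endgame. The reducibility lemmas you describe (no adjacent low-degree pathologies, small separations, critical extensions of small potential) only constrain the structure of a minimum counterexample; by themselves they do not produce a contradiction. The paper derives these constraints in Section~4 (no $K_4-e$, no identifiable pair, at most one degree-three vertex per triangle, $D_3(G)$ acyclic with diameter $\le 3$, the $H_7$-gadget claim), and then needs a separate \emph{discharging} argument in Section~5 --- assigning $\ch(v)=5-\tfrac{3}{2}d(v)$, moving charge along $D_3(G)$, and proving $\sum_v\ch^*(v)\le 0$, which forces $p(G)+T(G)\le 0$ and, after a final case analysis on $m$ (the number of edges between high-degree vertices), closes the proof. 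Nothing in your proposal plays the role of this counting step, and it is not a detail: it is where the structural information gets converted into the inequality $p(G)\le -2$.

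Second, a smaller but real misstatement: you say you will ``locate a proper subgraph $H\subsetneq G$ that is either 4-critical or extends to a 4-critical graph by a small local identification.'' A 4-critical graph has no proper 4-critical subgraph, so the first branch is vacuous; the identification is not an optional fallback but the entire mechanism. Concretely, one must take a proper $R\subsetneq V(G)$ with a 3-coloring $\phi$ of $G[R]$, pass to the identification $G_\phi(R)$ (Proposition~\ref{Phi}), extract a 4-critical $W$ there, and build the critical extension $R'$. All the potential bookkeeping (Lemma~\ref{Extension}, Claims~\ref{ExtDec}, \ref{Pot4}, \ref{Pot5}) runs through this construction, with $T(W)-T(W\setminus X)$ being exactly the correction term that the modified potential introduces. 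Your plan hints at this but does not commit to it, and the ``careful accounting of $V(G)\setminus V(H)$ and the edge-boundary of $H$'' that you gesture at is precisely the place where, without the identification machinery, the potential inequalities do not close.
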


\begin{figure}
\unitlength=1.2mm
\begin{picture}(150, 30)(10,0)

\put(25,30){$T_8$}
\multiput(40,0)(20,0){2}{\circle*{2}}
\put(50,10){\circle*{2}}
\multiput(40,20)(10,0){3}{\circle*{2}}
\multiput(45,30)(10,0){2}{\circle*{2}}
\drawline[1000](40,0)(60,0)(50,10)(40,0)(40,20)(45,30)(55,30)(40,20)
\drawline[1000](50,10)(50,20)(45,30)(60,20)(60,0)
\drawline[1000](50,20)(55,30)(60,20)

\put(95,30){$T_{11}$}
\multiput(110,0)(20,0){2}{\circle*{2}}
\put(120,10){\circle*{2}}
\multiput(110,20)(10,0){3}{\circle*{2}}
\multiput(115,30)(10,0){2}{\circle*{2}}
\put(120,0){\circle*{2}}
\multiput(125,-3)(0,6){2}{\circle*{2}}
\drawline[1000](110,0)(120,0)(125,3)(125,-3)(130,0)
\drawline[1000](120,0)(125,-3)(125,3)(130,0)(120,10)(110,0)(110,20)(115,30)(125,30)(110,20)
\drawline[1000](120,10)(120,20)(115,30)(130,20)(130,0)
\drawline[1000](120,20)(125,30)(130,20)
\end{picture}

\caption{$T_8$ and $T_{11}$}    \label{GraphT}
\end{figure}

Observe that, as all the graphs in Theorem~\ref{Main}(1)-(3) contain triangles, Theorem \ref{weak main} immediately follows from Theorem~\ref{Main}. 

The paper is organized as follows.
In Section 2, we prove some properties for $4$-Ore graphs.
It is a preparation for the rest of the paper.
In Section 3, we introduce the potential technique that is the main tool for proving our main theorem.
In Section 4, we investigate structures of minimum counterexamples of Theorem \ref{Main}.
In Section 5, we complete the proof of Theorem \ref{Main} by the discharging method.
Finally, we mention some concluding remarks in Section 6.

\section{Triangles in $4$-Ore graphs}
We investigate the triangles and $4$-cycles in $4$-Ore graphs in this section. These propositions and lemmas are a necessary preparation for our proof of Theorem \ref{Main}.

\begin{proposition}\label{VertexAndTriangle}
If $H$ is $4$-Ore and $v\in V(H)$, then there exists a triangle in $H\setminus v$.
\end{proposition}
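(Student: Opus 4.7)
My approach is induction on $|V(H)|$. The base case $H = K_4$ is immediate, since $K_4 \setminus v = K_3$ is itself a triangle. For the inductive step I may assume $H$ is obtained as the Ore-composition of two strictly smaller $4$-Ore graphs $G_1$ (the edge-side, with replaced edge $xy$) and $G_2$ (the split-side, with split vertex $z$ split into $z_1, z_2$), where in $H$ one identifies $z_1 = x$ and $z_2 = y$. Thus $V(H)$ is the disjoint union $(V(G_1)\setminus\{x,y\}) \cup \{x,y\} \cup (V(G_2)\setminus\{z\})$, and every edge of $H$ is either an edge of $G_1$ other than $xy$, or an edge of $G_2$ in which $z$ has been replaced by $z_1$ or $z_2$.

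I then split on which side of the composition contains $v$. If $v \in V(G_1)$ (including possibly $v \in \{x,y\}$), I apply the induction hypothesis to $G_2$ at the vertex $z$ to obtain a triangle $T \subseteq G_2 \setminus z$. Its vertices lie in $V(G_2)\setminus\{z\}$, which after identification is disjoint from $V(G_1)$, so $T$ avoids $v$; its edges are not incident to $z$ in $G_2$, so they are unchanged as edges of $H$. Hence $T$ is a triangle of $H \setminus v$. Symmetrically, if $v \in V(G_2) \setminus \{z\}$, I apply induction to $G_1$ at the vertex $x$, obtaining a triangle $T \subseteq G_1 \setminus x$. Since $T$ is disjoint from $x$, it cannot use the deleted edge $xy$, so $T$ survives as a triangle in $H$; and because $v \notin V(G_1)$, the triangle $T$ avoids $v$ as well.

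The only care required is the bookkeeping across the Ore-composition---verifying that the triangle delivered by induction really does survive the identification and edge deletion. I foresee no deeper obstacle: the essential observation is that whichever of $G_1, G_2$ does \emph{not} contain $v$ hands us, via the induction hypothesis, a triangle located entirely on that side, where the Ore-composition alters neither vertices nor adjacencies.
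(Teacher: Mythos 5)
Your proof is correct and matches the paper's own argument essentially line for line: induction on $|V(H)|$ with base case $K_4$, then splitting on whether $v$ lies on the edge-side or the split-side and invoking the inductive hypothesis at the split vertex $z$ or the replaced-edge endpoint $x$ of the opposite side, respectively. The extra bookkeeping you supply (that the triangle from the opposite side survives the edge deletion / vertex split and avoids $v$) is exactly the implicit content of the paper's shorter justification.
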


\begin{proof}
We proceed by induction on $|V(H)|$. If $H=K_4$, then every vertex is disjoint from a triangle as desired. So we may suppose that $H\ne K_4$. As $H$ is $4$-Ore, then $H$ is the Ore-composition of two $4$-Ore graphs $H_1$ and $H_2$. Without loss of generality suppose that $H_1$ is the edge-side and $H_2$ is the split-side of the composition. We now have two cases: either $v\in V(H_1)$ or $v\in V(H_2)\setminus V(H_1)$.

First suppose $v\in V(H_1)$. Let $z$ be the split vertex of $H_2$. By induction, there exists a triangle in $H_2\setminus z$, but then that triangle is also in $H\setminus v$ as desired. So we may suppose that $v\in V(H_2)\setminus V(H_1)$. Let $xy$ be the replaced edge of $H_1$. By induction, there exists a triangle in $H_1\setminus x$, but then as $v\ne x$, that triangle is also in $H\setminus v$ as desired.
\end{proof}

\begin{proposition}\label{2ndTriangle}
If $H\ne K_4$ is $4$-Ore and $T$ is a triangle in $H$, then there exists a triangle in $H\setminus V(T)$.
\end{proposition}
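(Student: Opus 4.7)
Since $H \neq K_4$ and $H$ is 4-Ore, $H$ is an Ore-composition of two 4-Ore graphs $H_1$ (edge-side, with replaced edge $xy$) and $H_2$ (split-side, with split vertex $z$). The plan is to apply Proposition \ref{VertexAndTriangle} to whichever of $H_1, H_2$ does not already contain $V(T)$, producing a triangle of $H$ disjoint from $V(T)$.

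The first step is to classify the triangles of $H$. Every edge of $H$ lies either in $E(H_1) \setminus \{xy\}$ (both endpoints in $V(H_1)$) or comes from an edge of $H_2$ after splitting (both endpoints in the image of $V(H_2)$ in $V(H)$, which consists of the original $V(H_2) \setminus \{z\}$ together with $x, y$). In particular, no edge of $H$ joins a vertex of $V(H_1) \setminus \{x, y\}$ to a vertex coming from $V(H_2) \setminus \{z\}$, and $xy \notin E(H)$. Hence every triangle $T$ of $H$ is of exactly one of two types: (A) $V(T) \subseteq V(H_1)$, or (B) $V(T) \cap V(H_1) \subseteq \{x, y\}$ with $|V(T) \cap \{x, y\}| \le 1$ (so the other vertices of $T$ come from $V(H_2) \setminus \{z\}$ and lie outside $V(H_1)$).

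In type (A), I would apply Proposition \ref{VertexAndTriangle} to the 4-Ore graph $H_2$ at the vertex $z$, obtaining a triangle $T'$ in $H_2 \setminus z$. The edges of $T'$ avoid $z$, so they remain edges of $H$, and $V(T')$ is disjoint from $V(H_1)$; thus $V(T') \cap V(T) = \emptyset$. In type (B), let $w \in \{x, y\}$ be chosen so that $V(T) \cap V(H_1) \subseteq \{w\}$ (take $w$ to be the unique vertex of $V(T) \cap \{x, y\}$ if it exists, otherwise $w = x$), and apply Proposition \ref{VertexAndTriangle} to $H_1$ at $w$ to obtain a triangle $T'$ in $H_1 \setminus w$. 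Since $w \notin V(T')$ and $w \in \{x, y\}$, the triangle $T'$ does not use the edge $xy$, so $T'$ is a triangle of $H$. Moreover $V(T') \subseteq V(H_1) \setminus \{w\}$ is disjoint from $V(T)$, because $V(T) \cap V(H_1) \subseteq \{w\}$ while the remainder of $V(T)$ lies outside $V(H_1)$.

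The only delicate point is the triangle classification---specifically, ruling out any ``mixed'' triangle with a vertex strictly inside each side of the Ore-composition. With that structural dichotomy in hand, the conclusion is a direct application of Proposition \ref{VertexAndTriangle} to the opposite side of the composition.
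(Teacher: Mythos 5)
Your proof is correct and follows essentially the same route as the paper's: decompose $H$ as an Ore-composition, observe that any triangle must lie entirely within one side of the composition (since $xy$ is deleted and there are no other cross-edges), and apply Proposition~\ref{VertexAndTriangle} to the opposite side. Your treatment of the triangle classification and the choice of $w$ in type (B) is somewhat more explicit than the paper, which states the dichotomy briefly as ``either $T\subseteq H_1$ or $T\subseteq H_2$'' and handles type (B) by assuming WLOG $y\notin T$ and removing $x$; but the underlying argument is identical.
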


\begin{proof}
As $H\ne K_4$ is $4$-Ore, then $H$ is the Ore-composition of two $4$-Ore graphs $H_1$ and $H_2$. Without loss of generality suppose that $H_1$ is the edge-side and $H_2$ is the split-side of the composition. We now have two cases: Since $x$ and $y$ are non-adjacent in $H$, either $T\subseteq H_1$ or $T\subseteq H_2$.

First suppose $T\subseteq H_1$. Let $z$ be the split vertex of $H_2$. By Proposition~\ref{VertexAndTriangle}, there exists a triangle in $H_2\setminus z$, but then that triangle is also in $H\setminus V(T)$ as desired. So we may suppose that $T\subseteq H_2$. Let $xy$ be the replaced edge of $H_1$. As $x$ and $y$ are not adjacent in $H$, we may suppose without loss of generality that $y\not\in T$. By Proposition~\ref{VertexAndTriangle}, there exists a triangle in $H_1\setminus x$, but then that triangle is also in $H\setminus V(T)$ as desired.
\end{proof}

Here is a useful proposition:

\begin{proposition}\label{TOre}
If $H$ is the Ore-composition of two graphs $H_1$ and $H_2$, then $T(H)\ge T(H_1)+T(H_2)-2$. Furthermore, if at least one of $H_1$ or $H_2$ is isomorphic to $K_4$ or $H_7$, then $T(H)\ge T(H_1)+T(H_2)-1$.
\end{proposition}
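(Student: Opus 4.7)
The natural plan is to take maximum short-cycle packings (cycles of length at most $4$) on each side and account for the cycles destroyed by the Ore-composition. Let $\mathcal{A}$ and $\mathcal{B}$ be maximum vertex-disjoint packings in $H_1$ and $H_2$ respectively. Since cycles are vertex-disjoint, at most one cycle of $\mathcal{A}$ contains the deleted edge $xy$, and at most one cycle of $\mathcal{B}$ passes through the split vertex $z$; deleting these yields subfamilies $\mathcal{A}'$, $\mathcal{B}'$ of sizes at least $T(H_1)-1$ and $T(H_2)-1$. Every cycle of $\mathcal{A}'$ survives in $H$ (it did not need $xy$), and every cycle of $\mathcal{B}'$ survives in $H$ and avoids both $z_1$ and $z_2$, hence avoids the interface vertices $x, y$. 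So $\mathcal{A}' \cup \mathcal{B}'$ is a vertex-disjoint family of short cycles in $H$, proving $T(H) \ge T(H_1) + T(H_2) - 2$.

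For the ``furthermore'' part, the strategy in each case is to save one of the two lost cycles using the structure of $K_4$ or $H_7$. If $H_1 \cong K_4$ then $K_4 - xy$ still contains a triangle, so $\mathcal{A}$ may be chosen entirely inside $H_1 - xy$; if $H_2 \cong K_4$ then $K_4 - z$ is itself a triangle, so $\mathcal{B}$ can avoid $z$. If $H_1 \cong H_7$, one verifies the finite statement $T(H_7 - e) \ge 2$ for every $e \in E(H_7)$; this reduces to one representative per edge-orbit of $\mathrm{Aut}(H_7)$, and the only nontrivial orbit (the ``cross'' edge $ab$ in the original $K_4$ edge-side used to build $H_7$) requires a $4$-cycle together with a triangle rather than two triangles.

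The delicate case is $H_2 \cong H_7$, where $T(H_7 - z) = 1$ for the four vertices in the large vertex-orbit of $H_7$. Here I would prove the following sub-claim: for every $z \in V(H_7)$ and every partition of $N(z)$ into two non-empty parts, the post-split graph contains two vertex-disjoint short cycles that avoid at least one of $z_1, z_2$. For the three ``good'' vertices $z$ (those with $T(H_7 - z) = 2$) this is immediate. For a ``bad'' $z$, by the symmetry of $H_7$ it suffices to treat one representative, and a brief case analysis over the three partitions of $N(z)$ produces, together with the untouched triangle on the opposite side of $H_7$, either a triangle or a $4$-cycle through $z_2$ using the neighbours assigned to $z_2$. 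Granting the sub-claim, suppose the two post-split cycles avoid $z_1$; in $H$ they use $y$ but not $x$. Combining them with a maximum short-cycle packing in $H_1 - xy$ that additionally avoids the vertex $y$---which has size at least $T(H_1) - 1$, since removing a single vertex decreases $T$ by at most one---yields $T(H) \ge (T(H_1) - 1) + T(H_2) = T(H_1) + T(H_2) - 1$. The main obstacle is the finite but careful case analysis behind this sub-claim.
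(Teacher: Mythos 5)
Your proposal is correct and follows essentially the same strategy as the paper. The first part (the bound $T(H)\ge T(H_1)+T(H_2)-2$) is the paper's argument verbatim: drop from a maximum packing the at-most-one cycle of $H_1$ using the replaced edge and the at-most-one cycle of $H_2$ using the split vertex, and note the remainders are vertex-disjoint in $H$. The cases $H_1\cong K_4$, $H_1\cong H_7$, and $H_2\cong K_4$ are also handled the same way: show $T(K_4-e)=T(K_4)$, $T(H_7-e)=T(H_7)$ (a finite check over edge orbits), and $T(K_4-z)=T(K_4)$, so that no cycle is lost on that side. One small labeling slip: you call the problematic edge of $H_7$ ``$ab$'', but $ab$ is the replaced edge of the edge-side $K_4$ and so is \emph{not} an edge of $H_7$; the edge you mean is the other edge of that $K_4$ (the one joining the two non-identified vertices). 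The content is right.

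Where you diverge from the paper is the case $H_2\cong H_7$. You prove a sub-claim---every split $H_2'$ of $H_7$ has two disjoint short cycles that jointly avoid one of $z_1,z_2$, say $z_1$---and combine those two cycles with a packing of size $\ge T(H_1)-1$ in $H_1-y$, which is legitimate because removing a single vertex drops $T$ by at most one. The paper instead first disposes of the case $T(H_1-e)=T(H_1)$ (in which no cycle is lost on the $H_1$ side), and in the remaining case observes that every maximum packing of $H_1$ must use the edge $e=xy$, so that $T(H_1\setminus\{x,y\})=T(H_1)-1$; this lets the paper pair that packing with \emph{any} two disjoint short cycles of $H_2'$, regardless of whether they hit $z_1$ or $z_2$. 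So the paper needs only $T(H_2')\ge 2$ for every split (a weaker, slightly easier finite check), at the price of a short case split on $H_1$; your version avoids the case split but requires the stronger sub-claim that the two cycles can be made to avoid one end. I checked your sub-claim and it does hold (the four ``bad'' vertices of $H_7$ form a single automorphism orbit, and each of the three partitions of the neighborhood of a representative yields a triangle or 4-cycle through $z_2$ disjoint from the opposite triangle of $H_7$), so your route is valid---it just demands a marginally heavier finite verification than the paper's.
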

\begin{proof}
Suppose without loss of generality that $H_1$ is the edge-side with replaced edge $e=xy$ and $H_2$ is the split-side with split vertex $z$. Now $T(H_2-z)\ge T(H_2)-1$ and $H(H_1-e)\ge T(H_1)-1$. However, every $\le 4$-cycle in $H_2-z$ is disjoint from any $\le 4$-cycle in $H_1-e$. Hence $T(H)\ge T(H_2-z) + T(H_1-e) \ge T(H_1)+T(H_2)-2$ as desired.

Furthermore, if $H_1=K_4$ or $H_7$, then $T(H_1)=T(H_1-e)$ and hence $T(H) \ge T(H_1)+T(H_2)-1$. Similarly if $H_2=K_4$, then $T(H_2-z)=T(H_2)$ and hence $T(H)\ge T(H_1)+T(H_2)-1$. Finally suppose that $H_2=H_7$. We are done unless $T(H_1)=T(H_1-e)+1$. This implies that every maximum set of vertex-disjoint $\le 4$-cycles uses the edge $e$. So $T(H_1\setminus \{x,y\})=T(H_1)-1$. Yet every split $H_2'$ of $H_7$ satisifes $T(H_2')=2$. Thus $T(H)\ge T(H_1\setminus \{x,y\}) + T(H_2') = T(H_1) - 1 + 2 = T(H_1)+T(H_2)-1$.
\end{proof}

\begin{corollary}\label{TOre3}
If $H$ is $4$-Ore, then $T(H)=1$ if and only if $H=K_4$. Similarly, $T(H)=2$ if and only if $H=H_7$.
\end{corollary}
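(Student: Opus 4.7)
The plan is to prove each equivalence separately, with the ``if'' directions by direct verification and the ``only if'' directions by induction on $|V(H)|$ via Proposition~\ref{TOre}. The ``if'' parts are immediate: $T(K_4) = 1$ is clear from $|V(K_4)| = 4$, and $T(H_7) = 2$ follows because $|V(H_7)| = 7 < 9$ precludes three vertex-disjoint short cycles while two vertex-disjoint triangles arise from the two copies of $K_4$ in any Ore-decomposition of $H_7$. The first converse is likewise immediate: by Proposition~\ref{2ndTriangle}, any $4$-Ore graph $H \ne K_4$ contains a pair of vertex-disjoint triangles, so $T(H) \ge 2$.

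For the second converse, I induct on $|V(H)|$ to show $T(H) \ge 3$ whenever $H$ is a $4$-Ore graph outside $\{K_4, H_7\}$. Write $H$ as the Ore-composition of $4$-Ore graphs $H_1$ (edge-side, replaced edge $e$) and $H_2$ (split-side, split vertex $z$); by induction, $T(H_i) \ge 3$ unless $H_i \in \{K_4, H_7\}$. Proposition~\ref{TOre} handles almost every combination: if neither $H_i \in \{K_4, H_7\}$, then $T(H) \ge T(H_1) + T(H_2) - 2 \ge 4$; if exactly one does, then $T(H) \ge T(H_1) + T(H_2) - 1 \ge 3$; if both equal $H_7$, then $T(H) \ge 2 + 2 - 1 = 3$; and both equal to $K_4$ forces $H = H_7$, excluded. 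When $H_1 = H_7$ is the edge-side and $H_2 = K_4$ the split-side, a short enumeration yields $T(H_7 - e) = 2$ for every edge $e$ of $H_7$, and the disjointness argument inside the proof of Proposition~\ref{TOre} gives $T(H) \ge T(H_7 - e) + T(K_4 - z) = 3$.

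The principal obstacle is the remaining case $H_1 = K_4$, $H_2 = H_7$, where $T(H_1 - e) = 1$ and $T(H_2 - z) \in \{1, 2\}$ depending on $z$. If $z$ is one of the three ``good'' vertices of $H_7$ with $T(H_7 - z) = 2$, Proposition~\ref{TOre} still yields $T(H) \ge 1 + 2 = 3$. For the four ``bad'' vertices (those with $T(H_7 - z) = 1$), the automorphism group of $H_7$ acts transitively on them, so up to symmetry I fix one representative $z$. Since swapping $z_1$ and $z_2$ produces an isomorphic $H$, I may further assume the split $N_{H_7}(z) = S_1 \cup S_2$ has $|S_2| = 2$. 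I would then verify directly, for each of the three choices of $S_2$, that $z_2$ combines with $S_2$ to form a short cycle in $H_2' \setminus \{z_1\}$---a triangle when $H_7[S_2]$ contains an edge, otherwise a $4$-cycle through a common neighbour of $S_2$ in $H_7 - z$---and that the unique triangle of $H_7$ disjoint from $N_{H_7}[z]$ is disjoint from this cycle. These two cycles in $H_2' \setminus \{z_1\}$, together with the triangle $xab$ of $H_1 - e$---which avoids $V(H_2') \setminus \{z_1\}$ since $V(H_1) \cap V(H_2') = \{z_1, z_2\}$---yield three vertex-disjoint $\le 4$-cycles in $H$, completing the induction.
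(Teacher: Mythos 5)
Your proof is correct and follows essentially the same route as the paper's: induction on $|V(H)|$ driven by Proposition~\ref{TOre}, with the critical residual case---both factors in $\{K_4,H_7\}$---settled by direct inspection, including the key subcase where $K_4$ is the edge-side and $H_7$ the split-side. Two points of divergence are worth noting. For the first biconditional you obtain $T(H)\ge 2$ immediately from Proposition~\ref{2ndTriangle}, whereas the paper reruns a minimal-counterexample argument through Proposition~\ref{TOre}; your route is shorter, though it silently uses the (easy, via Proposition~\ref{VertexAndTriangle}) fact that every $4$-Ore graph contains a triangle. For the $K_4$-edge-side/$H_7$-split-side case, the paper merely asserts that the split of $H_7$ contains two disjoint short cycles avoiding one of the split vertices; you supply the orbit analysis of $H_7$'s vertices and the explicit check over the three possible splits of a ``bad'' vertex, which is a genuine and useful fill-in of a step the paper leaves unjustified. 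One cosmetic caution: in your final paragraph the letters $x,a,b$ are reused both for the $K_4$ edge-side and for vertices of $H_7$, which is apt to confuse a reader; this does not affect the correctness of the argument.
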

\begin{proof}
Let us prove the first statement. Clearly, $T(K_4)=1$. Let $H$ be a $4$-Ore graph with a minimum number of vertices such that $T(H)=1$ and $H\ne K_4$. As $H\ne K_4$, $H$ is the Ore-composition of two graphs $H_1$ and $H_2$. If neither $H_1$ nor $H_2$ is $K_4$, then by the minimality of $H$, $T(H_1),T(H_2)\ge 2$. By Proposition~\ref{TOre}, $T(H)\ge T(H_1)+T(H_2)-2 \ge 2+2-2 =2$, a contradiction. So without loss of generality, we may assume that $H_1=K_4$. But then by Proposition~\ref{TOre}, $T(H)\ge T(H_1)+T(H_2)-1=T(H_2)$. So $T(H_2)=1$ and the minimality of $H$ implies that $H_2=K_4$. Thus $H=H_7$. But $T(H_7)=2$, a contradiction.

Let us prove the second statement. Clearly $T(H_7)=2$. Let $H$ be a $4$-Ore graph with a minimum number of vertices such that $T(H)=2$ and $H\ne H_7$. As $H\ne K_4$, $H$ is the Ore-composition of two graphs $H_1$ and $H_2$. As $H\ne H_7$, at least one of $H_1,H_2$ is not $K_4$. Suppose without loss of generality that $T(H_1)\ge T(H_2)$. Hence $H_1\ne K_4$. 

Suppose $H_1\ne H_7$. By the minimality of $H$, $T(H_1)\ge 3$. If $H_2=K_4$, then by Proposition~\ref{TOre}, $T(H)\ge T(H_1)+T(H_2)-1 \ge 3+1-1 = 3$, a contradiction. If $H_2\ne K_4$, then by Proposition~\ref{TOre}, $T(H)\ge T(H_1)+T(H_2)-2 \ge 3+2-2 = 3$, a contradiction.

So $H_1=H_7$. Hence $T(H_2)\le 2$. By the minimality of $H$, either $H_2=K_4$ or $H_7$. If $H_2=H_7$ then by Proposition~\ref{TOre}, $T(H)\ge T(H_1)+T(H_2)-1 = 2+2-1=3$, a contradiction. So $H_2=K_4$. If $K_4$ is the split-side and $H_7$ is the edge-side of the composition, then $T(H)\ge T(K_4-z) + T(H_7-e) = 1 + 2 = 3$, a contradiction where $e$ is the replaced edge and $z$ is the split vertex. So $K_4$ is the edge-side and $H_7$ is the split side. But then there exist two disjoint $\le 4$-cycles in the split of $H_7$ which do not use both split vertices. Yet there exists a triangle in $K_4-e$ disjoint from either end of the deleted edge. Hence $H$ has three disjoint $\le 4$-cycles and $T(H)\ge 3$, a contradiction.
\end{proof}

We say that a subgraph of graph $H$ isomorphic to $K_4-e$ is a \emph{diamond} of $H$ if the degree three vertices of the $K_4-e$ are also of degree three in $H$.

\begin{proposition}\label{4OreT3}
If $H$ is $4$-Ore with $T(H)=3$ and $H'$ is obtained by splitting a vertex $v$ of $H$ into two vertices $v_1,v_2$, then either
	\begin{enumerate}
		\item $H'$ has a diamond such that neither $v_1$ nor $v_2$ is a vertex of degree three in the diamond, or, 
		\item $T(H')\ge 3$.
	\end{enumerate}
\end{proposition}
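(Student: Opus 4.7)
My plan is to proceed by induction on $|V(H)|$, leveraging the Ore-decomposition of $H$. Since $T(H)=3$, Corollary~\ref{TOre3} forces $H \notin \{K_4, H_7\}$, so $H$ is the Ore-composition of two $4$-Ore graphs $H_1$ (the edge-side, with replaced edge $xy$) and $H_2$ (the split-side, with split vertex $z$). By Corollary~\ref{TOre3}, every $4$-Ore graph outside $\{K_4, H_7\}$ has $T \ge 3$, and by Proposition~\ref{TOre} if neither $H_1$ nor $H_2$ is in $\{K_4, H_7\}$ then $T(H) \ge T(H_1)+T(H_2)-2 \ge 4$, contradicting $T(H)=3$. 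Moreover, if one side is $H_7$ and the other is not $K_4$, Proposition~\ref{TOre} gives $T(H)\ge T(H_1)+T(H_2)-1\ge 4$. So the only configurations to analyze are: (a) some side is $K_4$, or (b) both sides are $H_7$ (a base case with $|V(H)|=13$).

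The key structural observation is that in each configuration $H$ contains an explicit diamond $D$. In case (a), WLOG $H_1 = K_4$ is the edge-side; then $V(H_1) = \{x,y,a,b\}$ induces $K_4 - xy$ in $H$, and $a,b$ have degree $3$ in $H$ (no neighbors outside $V(H_1)$), so $D = \{x,y,a,b\}$ is a diamond with degree-three vertices $a, b$. The subcase where $H_2=K_4$ is split-side is analogous, using whichever of $x, y$ inherits two edges from $z$. Case (b) is verified directly by hand: each copy of $H_7$ contributes its two diamonds (as identified in the structural analysis following Corollary~\ref{TOre3}), at least one of which transfers to $H$ with the correct degrees, and one can check all possible splits $v$ explicitly.

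With $D$ fixed and its degree-three vertices $p, q$, I case-split on where $v$ lies. If $v \notin V(D)$, then $D$ survives in $H'$ with the same degree-three vertices $p, q \ne v_1, v_2$, and conclusion~(1) holds. If $v$ is a degree-two vertex of $D$ (e.g.\ $v = x$), then the two $D$-neighbors $p, q$ of $v$ are split between $v_1$ and $v_2$: if both land in $N_{H'}(v_i)$ for some $i$, then $\{v_i, y, p, q\}$ is a diamond in $H'$ satisfying (1); otherwise $p \in N(v_1), q \in N(v_2)$ (WLOG), $D$ is destroyed, and I exhibit three disjoint short cycles in $H'$, namely the triangle $\{y, p, q\}$ (unaffected by the split of $x$) combined with two disjoint short cycles in $H_2$ avoiding $z$, produced via Propositions~\ref{VertexAndTriangle} and~\ref{2ndTriangle}. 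If $v$ is a degree-three vertex of $D$ (e.g.\ $v = a$), a pigeonhole argument on the three neighbors $\{x, y, b\}$ of $a$ shows that some $a_i$ inherits two of them, and depending on which pair, either a diamond is reconstructed on four vertices in $V(H_1)\cup\{a_1, a_2\}$ or a triangle survives that combines with short cycles from $H_2$ to give $T(H')\ge 3$. Finally, if $v \in V(H_2)\setminus\{z\}$ and $T(H_2)=3$, I invoke the inductive hypothesis on the split of $v$ in $H_2$ and transfer the resulting diamond or the three disjoint short cycles up to $H'$, combining if needed with a surviving triangle of $D$.

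The main obstacle is the subcase where the canonical diamond $D$ is destroyed by the split and where $H_2$ does not itself contribute three disjoint short cycles (notably when $H_2=H_7$, so that $T(H_2)=2$). Here the argument must combine the single surviving triangle from the $K_4$ side with the explicit triangle and diamond structure of $H_7$ (as enumerated in the proof of Corollary~\ref{TOre3}) and control how the distribution of $v$'s neighbors across $v_1$ and $v_2$ interacts with these cycles; the bookkeeping is delicate but finite.
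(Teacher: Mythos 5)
Your high-level plan (decompose $H$ as an Ore-composition, show one side is $K_4$ up to a both-$H_7$ exception, exhibit an explicit diamond and track it through the split) is in the same spirit as the paper's proof, but there is a genuine gap in the central subcase that you flag at the end, and it is not just ``delicate bookkeeping''---the strategy there is actually wrong. When $D$ is the diamond $H_1-e$ (with $H_1=K_4$), $v\in V(D)$, and the split destroys $D$ leaving only a single triangle on the $H_1$ side, you propose to finish by producing ``two disjoint short cycles in $H_2$ avoiding $z$, via Propositions~\ref{VertexAndTriangle} and~\ref{2ndTriangle}.'' Those two propositions do not give this: Proposition~\ref{VertexAndTriangle} gives one triangle avoiding $z$, and Proposition~\ref{2ndTriangle} gives a second triangle disjoint from the first but with no control on whether it avoids $z$. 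Indeed, $T(H_2-z)\ge 2$ is simply false in the problem case: when $H_2=H_7$ and $z$ is a vertex of degree three in $H_7$, one has $T(H_7-z)=1$. So outcome~(2) cannot be salvaged this way. What actually rescues this case is that $H_7-z$ (for degree-three $z$) contains a diamond of $H$ lying entirely inside $V(H_2)\setminus\{z\}$, hence disjoint from $v\in V(H_1)$; it survives the split and gives outcome~(1), not~(2).

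Beyond that gap, the paper avoids two of your complications entirely. It does not induct on $H$ (your inductive case $v\in V(H_2)\setminus\{z\}$ with $T(H_2)=3$ is unnecessary, since there the diamond $H_1-e$ is untouched and~(1) holds immediately), and it does not verify the both-$H_7$ case by hand: by choosing the Ore-decomposition so that $|V(H_1)|$ is minimum, the presence of any diamond (in particular one coming from either copy of $H_7$) forces a decomposition with $K_4$ as the edge-side, contradicting minimality. It also packages your case split on where $v$ sits in $D$ into the single observation that any split of a vertex of a diamond leaves a triangle or $4$-cycle among the diamond's vertices and the two split copies, which reduces the whole $v\in V(K_4)$ analysis to the dichotomy $T(H_2-z)\ge 2$ versus $T(H_2-z)\le 1$. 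Adopting that dichotomy, and realizing that the $T(H_2-z)\le 1$ branch lands in outcome~(1) rather than~(2), would close your gap.
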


\begin{proof}
Suppose that neither (1) nor (2) holds. As $T(H)=3$, $H\ne K_4,H_7$. As $H$ is $4$-Ore and $H\ne K_4$, $H$ is the Ore-composition of two $4$-Ore graphs, an edge-side $H_1$ and a split-side $H_2$. We choose the composition such that $|V(H_1)|$ is as small as possible.

First suppose that $H_1=K_4$. Note then that $H_1-e$ is a diamond in $H$ where $e$ is the replaced edge. If $v\not\in V(H_1)$, (1) holds, a contradiction. So we may assume that $v\in V(H_1)$. Now in every split of a vertex in a diamond, there still exists a triangle or $4$-cycle. Therefore if $T(H_2-z)\ge 2$ (where $z$ is the split vertex of $H_2$), then $T(H')\ge 3$ and (2) holds, a contradiction. So $T(H_2-z)\le 1$. Hence $T(H_2)\le 2$. By Corollary~\ref{TOre3}, $H_2$ is either $K_4$ or $H_7$. Yet $H_2\ne K_4$ as otherwise $H=H_7$, a contradiction. So $H_2=H_7$ and $T(H_2-z)=1$. But then $z$ is a vertex of degree three in $H_7$ and hence there exists a diamond of $H$ contained in $V(H_2)\setminus\{z\}$ and so too in $V(H)-v$ as $v\in V(H_1)$. Thus (1) holds, a contradiction.

Therefore $H_1\ne K_4$. Similarly $H_2\ne K_4$ as otherwise there exists another Ore-composition where the edge-side is $K_4$, contradicting the minimality of $|V(H_1)|$. By Corollary~\ref{TOre3}, $T(H_1),T(H_2)\ge 2$. If neither $H_1$ nor $H_2$ equals $H_7$, then by Proposition~\ref{TOre}, $T(H)\ge T(H_1)+T(H_2)-2 \ge 3+3-2=4$, a contradiction. If exactly one of $H_1$ or $H_2$ equals $H_7$, then by Proposition~\ref{TOre}, $T(H)\ge T(H_1)+T(H_2)-1 \ge 4$, a contradiction. So we may suppose that $H_1$ and $H_2$ are both isomorphic to $H_7$. But then $H_1-e$ contains a diamond in $H$, where $e$ is the replaced edge of $H_1$. But then $H$ is an Ore-composition where the edge-side equals $K_4$, contradicting the minimality of $|V(H_1)|$. 

\end{proof}

\begin{proposition}\label{SameOrK4e}
If $H=T_{8}, T_{11}$ or $4$-Ore with $T(H)=3$ and $f$ is an edge of $H$, then either $T(H)=T(H-f)$ or there exist $K_4-e\subseteq H-f$.
\end{proposition}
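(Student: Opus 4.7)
The plan is to split on $H$ into three cases: $H = T_8$, $H = T_{11}$, or $H$ is $4$-Ore with $T(H) = 3$.

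\emph{Cases $H = T_8$ and $H = T_{11}$.} I would check both graphs directly. Label the two degree-$4$ vertices of $T_8$ as $g, h$ and their three common neighbors on the middle row as $u, v, w$. The three vertex sets $\{u,v,g,h\}$, $\{u,w,g,h\}$, $\{v,w,g,h\}$ each induce a copy of $K_4 - e$, and for every edge $f \ne gh$ at least one of these three copies survives in $T_8 - f$. For $f = gh$ I would instead exhibit two vertex-disjoint $\le 4$-cycles (the bottom triangle and the $4$-cycle $u\text{-}g\text{-}v\text{-}h$), witnessing $T(T_8 - gh) = 2 = T(T_8)$. Since $T_{11}$ arises as the Ore-composition of $T_8$ with $K_4$, it additionally contains a bottom ``diamond'' which is a $K_4 - e$ vertex-disjoint from the three top copies; thus no single edge of $T_{11}$ can lie in every $K_4 - e$ subgraph, and one such copy always survives the deletion of $f$.

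\emph{Case $H$ is $4$-Ore with $T(H) = 3$.} I would induct on $|V(H)|$. By Corollary~\ref{TOre3}, $H \ne K_4$ and $H \ne H_7$, so $H$ is the Ore-composition of two $4$-Ore graphs $H_1$ (edge-side, replaced edge $e_0 = xy$) and $H_2$ (split-side, split vertex $z$). Combining Proposition~\ref{TOre} with the hypothesis $T(H) = 3$ and Corollary~\ref{TOre3} severely restricts the pair $(H_1, H_2)$: in every admissible configuration at least one of the two sides is $K_4$ or $H_7$, while the other is $K_4$, $H_7$, or a $4$-Ore graph with $T = 3$ (to which induction applies). When a side equals $K_4$ as the edge-side, the ``$K_4 - e_0$'' inside $H$ is already a $K_4 - e$; when a side equals $H_7$, it itself contains several $K_4 - e$ subgraphs (cf.\ the proof of Proposition~\ref{4OreT3}). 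For each configuration and each location of $f$ (an edge of $H_1 - e_0$, or an edge of the split copy of $H_2$), I would either exhibit a $K_4 - e$ in $H - f$ using the structure on the ``small'' side, or, if $f$ destroys all such $K_4 - e$'s there, apply the inductive hypothesis to the larger side and combine the resulting substructure with $\le 4$-cycles supplied by the small side to obtain $T(H - f) = 3$.

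The main obstacle is the Ore-composition case when $f$ lies on the large side and is simultaneously critical for its maximum $\le 4$-cycle packing and for every $K_4 - e$ visible there: in that situation one must lift the $K_4 - e$ (or the packing) produced by the inductive hypothesis on the large side across the identification $x = z_1, y = z_2$, and verify it remains vertex-disjoint from the parts of the small side that touch the composition. The bookkeeping --- tracking which edges of $H$ correspond to which edges of $H_1$ or $H_2$, and ensuring the chosen substructure avoids $x, y, z_1, z_2$ --- is the most delicate part, but the restricted list of configurations (only finitely many involving $K_4$ and $H_7$) keeps it manageable.
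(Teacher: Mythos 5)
The $T_8$ and $T_{11}$ cases are handled essentially the same way the paper does (direct inspection: in $T_8$ only $f=gh$ can kill all three $K_4-e$'s, and then two disjoint short cycles survive; in $T_{11}$ two vertex-disjoint $K_4-e$'s exist). The $4$-Ore case, however, is where you diverge, and this is also where your sketch has a real gap.

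The paper avoids the Ore-composition induction entirely by a short reduction: given $f=uv$, let $H_v$ be the graph obtained from $H$ by splitting $v$ into $v_1$ (keeping all neighbors of $v$ except $u$) and a pendant vertex $v_2$ adjacent only to $u$. Because $v_2$ is pendant, $T(H_v)=T(H-f)$ and $H_v$ contains $K_4-e$ if and only if $H-f$ does. Proposition~\ref{4OreT3} applied to the split $H_v$ then immediately yields either a diamond (hence a $K_4-e$) in $H_v$, giving $K_4-e\subseteq H-f$, or $T(H_v)\ge 3$, which together with $T(H-f)\le T(H)=3$ gives $T(H-f)=T(H)$. This buys a two-line proof with no decomposition analysis; your route instead re-derives a version of what Proposition~\ref{4OreT3} already packaged.

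The genuine gap in your induction is the location of $f$ on the \emph{small} side. You write that if $f$ destroys all $K_4-e$'s on the small side you would ``apply the inductive hypothesis to the larger side,'' but the inductive hypothesis concerns the deletion of an edge of $H_2$, and in this case no edge of $H_2$ has been deleted --- the hypothesis simply does not apply. You would need a separate argument showing that the intact split of $H_2$ together with whatever survives of $H_1-e_0-f$ still carries three disjoint $\le 4$-cycles (or a $K_4-e$), and that is not automatic: when $H_1=K_4$, $T(H_2)=3$, and $f$ lies in $H_1-e_0$, the split of $H_2$ only guarantees $T\ge 2$, so a third short cycle must be located and checked for disjointness. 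There is also the unaddressed subtlety that edges incident to $x=z_1$ or $y=z_2$ belong ambiguously to both sides, so ``$f$ is an edge of the split copy of $H_2$'' is not a clean dichotomy. None of these are fatal --- the statement is true --- but as sketched the induction does not close, and the paper's reduction to Proposition~\ref{4OreT3} sidesteps all of it.
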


\begin{proof}
If $H=T_{11}$, then $K_4-e \subseteq H-f$ since $T_{11}$ has two disjoint copies of $K_4-e$.
When $H=T_8$, $f$ must be the edge incident with both vertices of degree four as otherwise $H-f$ contains a $K_4-e$ as desired. 
But then $T(H-f)=2=T(H)$ as desired.

So we may assume that $H$ is $4$-Ore with $T(H)=3$. Let $u,v$ be the ends of $f$.
Let $H_v$ be the graph obtained from $H$ by splitting $v$ into two vertices $v_1,v_2$ such that $v_2$ has degree one and $u$ is the neighbor of $v_2$.
Then $T(H-f)=T(H_v)$, and $K_4-e \subseteq H-f$ if and only if $K_4-e \subseteq H_v$.
Hence by Proposition~\ref{4OreT3}, either $K_4-e\subseteq H_v$ or $T(H_v)\ge 3$. The former implies that $K_4-e\subseteq H-f$ as desired and the latter implies that $T(H)=T(H-f)$ as desired.
\end{proof}

\section{Potential}

We follow Kostochka and Yancey's proof of Theorem~\ref{Ore4}. However, we modify their definition of potential by subtracting $T(G)$ as follows:

\begin{definition}
Let $G$ be a graph. The \emph{potential} of $G$, denoted by $p(G)$, is $5|V(G)|-3|E(G)|-T(G)$.\\
Let $R\subseteq V(G)$. The \emph{potential} of $R$, denoted by $p_G(R)$ is $p(G[R])$.
\end{definition}

\begin{definition}
If $R\subsetneq V(G)$ with $|R|\ge 4$, and $\phi$ is a $3$-coloring of $G[R]$, we define the \emph{$\phi$-identification of $R$ in $G$}, denoted by $G_{\phi}(R)$, to be the graph obtained from $G$ by identifying for each $i\in\{1,2,3\}$ the vertices colored $i$ in $R$ to a vertex $x_i$, adding the edges $x_1x_2,x_1x_3,x_2x_3$ and then deleting parallel edges. 
We say that $\{x_1x_2,x_1x_3,x_2x_3\}$ is the {\it triangle corresponds to $R$}.
\end{definition}

\begin{proposition}[Claim 8~\cite{KY2}]\label{Phi}
If $G$ is $4$-critical, $R\subsetneq V(G)$ with $|R|\ge 4$, and $\phi$ is a $3$-coloring of $G[R]$, then $\chi(G_{\phi}(R))\ge 4$.
\end{proposition}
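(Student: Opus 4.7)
The plan is to prove the contrapositive: given a proper $3$-coloring $\psi$ of $G_{\phi}(R)$, I will construct a proper $3$-coloring of $G$, contradicting $4$-criticality. Since $x_1, x_2, x_3$ form a triangle in $G_{\phi}(R)$, they receive three distinct colors under $\psi$; by permuting the palette I may assume $\psi(x_i) = i$ for each $i \in \{1,2,3\}$. The natural candidate coloring of $V(G)$ is $\chi$ defined by $\chi(v) = \phi(v)$ for $v \in R$ and $\chi(v) = \psi(v)$ for $v \in V(G) \setminus R$.

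To verify that $\chi$ is a proper $3$-coloring of $G$, consider any edge $uv \in E(G)$ and split into cases according to where its endpoints lie. If both endpoints are in $R$, propriety is inherited from $\phi$, which is a proper coloring of $G[R]$. If both endpoints lie in $V(G)\setminus R$, then neither is touched by the identification, so $uv$ survives as an edge of $G_{\phi}(R)$ and propriety follows from $\psi$. If exactly one endpoint, say $u$, lies in $R$, then the identification replaces $u$ by $x_{\phi(u)}$, so $x_{\phi(u)}v$ is an edge of $G_{\phi}(R)$ (at least one copy survives the deletion of parallels); hence $\chi(v) = \psi(v) \neq \psi(x_{\phi(u)}) = \phi(u) = \chi(u)$, as required.

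The only subtlety is that $G_{\phi}(R)$ is defined to have parallel edges deleted, but this is harmless: parallel edges impose no extra coloring constraint, so a proper $3$-coloring of the simplified graph $G_{\phi}(R)$ automatically certifies propriety of every original cross edge after pullback. The hypothesis $R \subsetneq V(G)$ is what makes the coloring $\chi$ a coloring of $G$ rather than merely of the quotient; otherwise $\phi$ would already $3$-color $G$ and the situation would be incompatible with $G$ being $4$-critical from the outset. I expect no further obstacles — the argument is essentially a one-line pullback once the palette permutation is set.
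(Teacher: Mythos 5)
Your argument is correct and is the standard one; the paper itself gives no proof of this proposition, citing it as Claim~8 of Kostochka and Yancey~\cite{KY2}, and the contrapositive/pullback argument you give is precisely the argument used there.
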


Since the resulting graph contains a $4$-critical graph, we may extend the set $R$ to a larger set as follows:

\begin{definition}
Let $G$ be a $4$-critical graph, $R\subsetneq V(G)$ with $|R|\ge 4$ and $\phi$ a $3$-coloring of $G[R]$. Now let $W$ be a $4$-critical subgraph of $G_{\phi}(R)$ and $T$ the triangle corresponding to $R$ in $G$. Then we say that $R' = (V(W)-V(T))\cup R$ is the \emph{critical extension} of $R$ with \emph{extender} $W$. We call $W\cap T$ the \emph{core} of the extension. If in $G$ a vertex in $W-V(T)$ has more neighbors in $R$ than in $V(W \cap T)$ or there exists an edge in $G[V(W)-V(T)]$ that is not in $W-V(T)$, then we say that the extension is \emph{incomplete}. Otherwise, we say the extension is \emph{complete}. If $R'=V(G)$, we say the extension is \emph{spanning}. If the extension is both complete and spanning, then we say it is \emph{total}.
\end{definition}

Note that every critical extension has a non-empty core as otherwise $G$ would contain a proper non-$3$-colorable subgraph contradicting that $G$ is $4$-critical. The following lemma bounds the potential of critical extensions in terms of the original set and the extending critical graph.

\begin{lemma}\label{Extension}
If $G$ is a $4$-critical graph, $R\subsetneq V(G)$ with $|R|\ge 4$ and $R'$ is a critical extension of $R$ with extender $W$ and core $X$, then
$$p_G(R')\le p_G(R) + p(W) - f(|X|) +T(W)-T(W\setminus X),$$
where $f(|X|)=5/7/6$ when $|X|=1/2/3$ respectively.\\
Furthermore, 
$$p_G(R')\le p_G(R)+p(W)-3.$$
\end{lemma}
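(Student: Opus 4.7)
The plan is to compare the three ingredients of $p_G(R')$ with the corresponding quantities for $p_G(R)$ and $p(W)$, correcting for the overlap recorded in the core $X = W\cap T$. Vertex counting is immediate: since $R'=R\cup(V(W)-V(T))$ and these sets are disjoint, $|V(R')|=|R|+|V(W)|-|X|$. For edges, I would partition $E(G[R'])$ into edges inside $R$, edges inside $V(W)-V(T)$, and edges between these two sets. Every edge of $W-V(T)$ is an edge of $G$. For every edge $vx_i$ of $W$ with $v\in V(W)-V(T)$ and $x_i\in X$, the construction of $G_\phi(R)$ guarantees at least one edge in $G$ from $v$ to some vertex of $R$ colored $i$, and these edges are distinct as $v$ and $i$ vary. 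Summing and using $|E(W)| = |E(W-V(T))| + |E_W(V(W)-V(T),X)| + |E(W[X])|$,
\[
|E(G[R'])| \;\ge\; |E(G[R])| + |E(W)| - |E(W[X])|.
\]

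For $\le 4$-cycles, $W\setminus X = W-V(T)$ is a subgraph of $G[V(W)-V(T)]$, so any vertex-disjoint $\le 4$-cycle family in $W\setminus X$ survives inside $G[R']$; combining with such a family in $G[R]$ on a disjoint vertex set gives $T(G[R'])\ge T(G[R])+T(W\setminus X)$. Plugging the three estimates into the definition of $p_G(R')$ and rearranging produces the master inequality
\[
p_G(R') \;\le\; p_G(R)+p(W) + \bigl(T(W)-T(W\setminus X)\bigr) - \bigl(5|X|-3|E(W[X])|\bigr).
\]
Since $E(W[X])\subseteq E(T)$ and the triangle $T$ contains at most $\binom{|X|}{2}$ edges spanned by any $|X|$ of its vertices, a direct check for $|X|\in\{1,2,3\}$ gives $5|X|-3|E(W[X])|\ge 5,7,6=f(|X|)$, proving the first inequality.

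For the \emph{furthermore} clause, I would observe that deleting $|X|$ vertices from $W$ destroys at most $|X|$ members of a maximum vertex-disjoint $\le 4$-cycle family, so $T(W)-T(W\setminus X)\le|X|$. Combining with the first inequality gives $p_G(R')\le p_G(R)+p(W)+|X|-f(|X|)$, and $|X|-f(|X|)\in\{-4,-5,-3\}$, each at most $-3$.

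The main bookkeeping care is that a critical extension need not be complete, so $G$ may carry extra edges or extra short cycles inside $V(W)-V(T)$ or across to $R$ beyond those recorded in $W$; but such extras only make $|E(G[R'])|$ and $T(G[R'])$ larger, and both appear in $p_G(R')$ with negative sign, so every inequality above only becomes stronger. The choice of $f(|X|)$ is forced in exactly the right way: it absorbs the worst case where every triangle edge spanned by $X$ happens to lie in $W$, and the case $|X|=3$ is tight for both inequalities simultaneously.
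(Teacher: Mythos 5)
Your proof is correct and follows essentially the same route as the paper: count vertices, edges, and $\le 4$-cycles separately, reassemble into the master inequality, and then bound $T(W)-T(W\setminus X)\le |X|$. The only cosmetic difference is that you track $|E(W[X])|$ exactly before bounding it by $\binom{|X|}{2}$, whereas the paper subtracts $\binom{|X|}{2}$ directly; both give the same $f(|X|)$.
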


\begin{proof}
Each vertex of $G[R']$ is a vertex of $G[R]$ or $W\setminus X$, while each of edge of $G[R]$ and $W-E(G_{\phi}[X])$ is an edge of $G[R']$. 
So $\lvert R' \rvert = \lvert R \rvert + \lvert V(W) \rvert - \lvert X \rvert$, and $\lvert E(G[R']) \rvert \geq \lvert E(G[R]) \rvert + \lvert E(W) \rvert - {{\lvert X \rvert}\choose{2}}$.
Furthermore, $T(R')\ge T(R)+T(W\setminus X)$.
Therefore, 
$$p_G(R') \leq p_G(R)+p(W) - 5\lvert X \rvert + 3 {\lvert X \rvert \choose 2} + T(W) - T(W \setminus X).$$
Note that $f(X) = 5\lvert X \rvert - 3 {\lvert X \rvert \choose 2}$.
Observe that $T(W)-T(W\setminus X) \le |X|$. 
Hence $p_G(R')\le p_G(R)+p(W)-3$.
\end{proof}

\section{Structures of a Minimum Counterexample}
In this section, we prove that every minimum counterexample of Theorem~\ref{Main} has certain structures.
We call the graphs in the first three statements of Theorem~\ref{Main} \emph{exceptional}.

For the rest of the paper, let $G$ denote a counterexample of Theorem~\ref{Main} with the minimum number of vertices.
Recall that Kostochka and Yancy prove that $5\lvert V(H) \rvert - 3\lvert E(H) \rvert=2$ for every $4$-Ore graph $H$.
So it suffices to prove the fourth statement of Theorem~\ref{Main}. 
Hence, $p(G)\ge -1$, and $G$ is not exceptional.

\begin{claim}\label{ExtDec}
If $R\subsetneq V(G)$, $|R|\ge 4$ and $R'$ is a critical extension of $R$, then $p_G(R)\ge p_G(R')+2$. Furthermore $p_G(R')\ge p(G)$ and hence $p_G(R)\ge p(G)+2$.
\end{claim}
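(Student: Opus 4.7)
The plan is to establish the three claims in sequence using Lemma~\ref{Extension} together with the minimality of $G$ as a counterexample. Since the ``hence'' clause is an immediate consequence of the first two inequalities, the real content lies in proving $p_G(R) \ge p_G(R') + 2$ and $p_G(R') \ge p(G)$ separately.

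For the first inequality, the key observation is that the extender $W$ is a 4-critical subgraph of $G_{\phi}(R)$, and since $|R| \ge 4$ we have
\[
|V(G_{\phi}(R))| \;=\; |V(G)| - |R| + 3 \;\le\; |V(G)| - 1,
\]
so $|V(W)| < |V(G)|$. By the minimality of $G$ as a counterexample, $W$ itself is not a counterexample to Theorem~\ref{Main}, and inspection of cases (1)--(4) of that theorem shows $p(W) \le 1$ for every 4-critical graph (with equality only for $K_4$). Substituting into the ``furthermore'' bound of Lemma~\ref{Extension} then gives $p_G(R') \le p_G(R) + p(W) - 3 \le p_G(R) - 2$, which is the first inequality.

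For the second inequality, I would iterate the extension procedure. Set $R_0 := R$ and $R_1 := R'$. Whenever $R_i \subsetneq V(G)$, the induced subgraph $G[R_i]$ is a proper subgraph of the 4-critical graph $G$ and hence 3-colorable; pick such a coloring and let $R_{i+1}$ be a critical extension of $R_i$. Each step strictly enlarges the set, because any 4-critical graph has at least four vertices and so cannot be contained in the three-vertex triangle $T_i$ corresponding to $R_i$; thus $V(W_i) \setminus V(T_i)$ contains a vertex of $G$ outside $R_i$. Since $|V(G)|$ is finite, the sequence terminates with $R_k = V(G)$ for some $k \ge 1$, where $p_G(R_k) = p(G)$. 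Applying the first inequality along the chain yields $p_G(R') \ge p_G(R_k) + 2(k-1) \ge p(G)$, and combining the two inequalities gives $p_G(R) \ge p(G) + 2$.

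There is no deep obstacle; the work consists of bookkeeping on the iteration. Specifically, I must verify that each intermediate $R_i$ has $|R_i| \ge 4$ (immediate since $|R_0| \ge 4$ and the sizes only grow), that it is a proper subset of $V(G)$ at the time of extension (guaranteed by taking the first $k$ with $R_k = V(G)$), and that the appeal to Theorem~\ref{Main} on the extender is legitimate (which uses only the vertex count bound computed above and the minimality of $G$).
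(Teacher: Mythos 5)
Your proof is correct and follows essentially the same approach as the paper: both use Lemma~\ref{Extension} with the bound $p(W)\le 1$ from minimality of $G$ to obtain the drop of $2$, and both iterate critical extensions until reaching $V(G)$ to obtain $p_G(R')\ge p(G)$. Your version simply spells out the bookkeeping of the iteration (strict growth, termination, size hypotheses) that the paper leaves implicit.
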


\begin{proof}
Suppose that $R'$ is a critical extension with extender $W$. 
As $G$ is a minimum counterexample, $p(W)\le 1$. 
By Lemma~\ref{Extension}, $p_G(R')\le p_G(R) + p(W) - 3 \le p_G(R) -2$ as desired. 
By repeatedly applying this result to further critical extensions, we find that $p(G)\le p_G(R') \le p_G(R)-2$.
\end{proof}

\begin{claim}\label{NoK4e}
$G$ does not contain $K_4-e$ as a subgraph.
\end{claim}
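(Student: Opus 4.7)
I plan to suppose for contradiction that $K_4-e \subseteq G$ on vertices $R = \{u,v,a,b\}$ (with $uv$ the missing edge) and apply the critical extension technique at $R$. Since $G\ne K_4$ by non-exceptionality and $G$ is $4$-critical, $G[R]$ cannot contain $K_4$ (else $G=K_4$), so $G[R]$ is exactly $K_4-e$: $|E(G[R])|=5$, $T(G[R])=1$ (the two triangles $abu, abv$ share the edge $ab$), and $p_G(R)=4$. A key observation I will use: no $y\in V(G)\setminus R$ can be adjacent to both of $\{a,b\}$ and to at least one of $\{u,v\}$, for otherwise $\{y,a,b\}\cup\{u \text{ or } v\}$ would induce $K_4\subseteq G$, forcing $G=K_4$.

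I then take the essentially unique $3$-coloring $\phi$ of $G[R]$ with $\phi(u)=\phi(v)$. The identified graph $G_\phi(R)$ equals $G/uv$ since the triangle on $x_1=a$, $x_2=b$, $x_3=$ merged $u,v$ already exists in $G$. By Proposition~\ref{Phi}, $G_\phi(R)$ has a $4$-critical subgraph $W$; let $X=V(W)\cap\{x_1,x_2,x_3\}$ be the core and $R'$ the resulting critical extension. By minimality of $G$, $p(W)\le 1$. If $x_3\notin V(W)$ then $W\subseteq G-\{u,v\}$, a proper subgraph of the $4$-critical $G$, a contradiction; hence $x_3\in X$.

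When $R'\subsetneq V(G)$, Claim~\ref{ExtDec} gives $p_G(R')\ge p(G)+2\ge 1$, but Lemma~\ref{Extension} together with $p(W)\le 1$, $T(W)-T(W\setminus X)\le |X|$, and the refinement $T(K_4)-T(K_4-v)=0$ (for the $W=K_4$, $|X|=1$ sub-case) yields $p_G(R')\le 0$ in every sub-case, a contradiction. Hence the extension is spanning: $R'=V(G)$. In this spanning case, $|V(G)|=4+|V(W)|-|X|$ and $|E(G)|\ge 5+|E(W)|-|E(W[X])|\ge 5+(5|V(W)|-2)/3-\binom{|X|}{2}$ by Theorem~\ref{Ore4}, yielding $5|V(G)|-3|E(G)|\le 7-f(|X|)$, which equals $2,0,1$ for $|X|=1,2,3$ respectively. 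If $5|V(G)|-3|E(G)|=2$ (only possible for $|X|=1$), then $G$ is $4$-Ore by \cite{KY3}, and since $G$ is non-exceptional, Corollary~\ref{TOre3} forces $T(G)\ge 4$ and hence $p(G)\le -2$. Otherwise $5|V(G)|-3|E(G)|$ is strictly less than its allowed maximum, and combining with an appropriate lower bound on $T(G)$ (via $abu$ together with disjoint short cycles from $W\setminus X$ and lifted cycles through $v$) gives $p(G)\le -2$; either way, $p(G)\ge -1$ is contradicted.

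The main obstacle is producing the sufficient lower bound on $T(G)$ in the spanning case when $G$ is not $4$-Ore. Here the structural observation from the first paragraph plays a decisive role: any $y\in V(W)\setminus V(T)$ that is adjacent in $W$ to all of $x_1,x_2,x_3$ would lift to a vertex of $G$ adjacent to $a,b$ and to one of $u,v$, forcing $G=K_4$. This rules out $W=K_4$ when $|X|\in\{2,3\}$ and restricts neighborhoods more generally. Combined with the $4$-criticality of $W$ (giving minimum degree $3$ at $x_3$ and a supply of short cycles through it), this allows me to exhibit the required disjoint $\le 4$-cycles in $G$ by careful case analysis on the identity of $W$ and on the split of $x_3$'s neighbors between $u$ and $v$; the most delicate sub-case, $|X|=1$ with $W=K_4$, yields $|V(G)|=7$ and $|E(G)|=11$, forcing $G=H_7$ (exceptional) unless $|E(G)|\ge 12$, which directly gives $p(G)\le -2$.
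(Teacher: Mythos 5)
Your opening matches the paper's setup: pick the $K_4-e$, observe the unique coloring merges $u,v$, get a critical extension with $x_3$ in the core, and split on $|X|$. The non‑spanning half of your argument is sound and tracks the paper's use of Claim~\ref{ExtDec} and Lemma~\ref{Extension}. However, the spanning half has several genuine gaps that leave the proof incomplete.

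First, you acknowledge that ``the main obstacle is producing the sufficient lower bound on $T(G)$'' and then only sketch how you would do it. That is precisely the hard part, and it is not done. Second, the stated target $p(G)\le -2$ is unachievable in several spanning sub-cases: for example $|X|=2$, $W=K_4$ forces $|V(G)|=6$, $|E(G)|=10$, and indeed $G=W_5$ with $p(G)=-1$; likewise $|X|=1$, $W=K_4$ can give $G=H_7$ with $p(G)=0$. The correct conclusion in these sub-cases is ``$G$ is exceptional,'' not ``$p(G)\le -2$.'' You notice this for $H_7$ but then assert $p(G)\le -2$ ``either way,'' which is not accurate. Third, your structural observation (no $y$ adjacent to both $a,b$ and to $u$ or $v$) does rule out $W=K_4$ for $|X|=3$ but not for $|X|=2$: when $X=\{x_1,x_3\}$, the two non-core vertices of $K_4$ are adjacent to $x_1$ and $x_3$ but not to $x_2=b$, so the observation gives nothing, and $W=K_4$ is very much alive (yielding $W_5$). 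Finally, the paper's proof chooses $R$ extremally, maximizing the number of degree-three vertices of the $K_4-e$ that remain degree three in $G$; this is essential in the $|X|=3$, $W$ $4$-Ore case, where Proposition~\ref{4OreT3} produces a diamond and the choice of $R$ converts that diamond into a contradiction. Your version fixes $R$ arbitrarily and instead tries to force $T(G)\ge 3$ directly, but Proposition~\ref{4OreT3} offers only a dichotomy (diamond or many short cycles), and in the diamond branch you would simply rediscover another $K_4-e$ with no mechanism to terminate. You would need to import the paper's extremal choice of $R$ (or a substitute) to close this branch.
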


\begin{proof}
Suppose not. Choose $R\subseteq V(G)$, $|R|=4$ with $K_4-e \subseteq G[R]$ such that the number of vertices of degree three in the $K_4-e$ which are also of degree three in $G$ is maximized. Note as $G\ne K_4$, $R\subsetneq V(G)$ and $G[R] = K_4-e$.
 
As $|R|\ge 4$, there exists a critical extension $R'$ of $R$. 
Let $W$ be an extender of the extension with core $X$.
Note that no vertex in $G-R$ is adjacent to both ends of $e$, otherwise, the graph obtained from $G$ by removing an edge between the ends of $e$ and their common neighbors in $G-R$ is still not $3$-colorable.

First, assume that $\lvert X \rvert=1$.
Since $G[R']$ is isomorphic to the Ore-composition of $W$ and $K_4$ with $K_4$ as the edge side, we know that $G[R']$ is $4$-critical, and hence $G=G[R']$. Since $-1\le p(G)=p_G(R')\le p_G(R)+p(W)-f(|X|)+|X|\le 4+p(W)-4=p(W)$ by Lemma~\ref{Extension}, $W$ is exceptional by the minimality of $G$.

If $W$ is $4$-Ore, then $G$ is $4$-Ore, so $-1 \leq p(G) = 2-T(G)$.
In other words, $G$ is $4$-Ore with $T(G) \leq 3$, which is exceptional, a contradiction.
Hence, $W=W_5,T_8$ or $T_{11}$.
In these cases, $-1 \leq p(G) = p(W)+T(W)-T(G) = -1+T(W)-T(G)$, so $T(G) \leq T(W)$.
This implies that $W=T_8$ and $W \cap X$ is a vertex in the triangle in $W$ containing no vertices of degree four, and $G=T_{11}$, a contradiction.
So $\lvert X \rvert \geq 2$.

Next assume that $\lvert X \rvert=2$. By Lemma~\ref{Extension}, $p_G(R')\le p_G(R)+p(W)-5$. As $p_G(R)=4$, we have that $p_G(R')\le p(W)-1$. So $p(W)\ge p_G(R')+1$. Yet by Claim~\ref{ExtDec}, $p_G(R')\ge p(G)$. Thus $p_G(W)\ge p(G) + 1 \ge -1+1 = 0$. Thus $W=K_4$ or $H_7$.  If $W=K_4$, then $G=W_5$, a contradiction. So $W=H_7$ and $p(W)=0$. Note however that $T(W)-T(W\setminus X)\le 1$ when $\lvert X \rvert=2$ since it is impossible to remove all triangles in $H_7$ by deleting two adjacent vertices. Therefore, by Lemma~\ref{Extension}, $p_G(R')\le p_G(R)+p(W) - 7 + 1 = 4 + 0 - 7 +1 =-2$. Hence $p(G)\le -2$, a contradiction.

Finally we may assume that $\lvert X \rvert=3$. We claim that $G[R']=G$ when $\lvert X \rvert=3$. Otherwise $G[R']$ is a proper subgraph of $G$ and hence has a $3$-coloring. But then since the ends of $e$ receive the same color in any $3$-coloring of $G[R']$, this would induce a $3$-coloring of $W$, a contradiction. 

Note that no vertex in $G-R$ is adjacent to the both ends of $e$. Hence $G$ is obtained from $W$ by splitting a vertex $x$ in a triangle $T=xyz$ into two vertices $x_1,x_2$ such that $N(x_1)\cup N(x_2)=N(x)$ and $N(x_1)\cap N(x_2)=\{y,z\}$. As every vertex in $G$ has degree at least three, it follows that the degree of $x$ in $W$ is at least four. 

Observe that $|E(G)|=|E(W)|+2$ and $|V(G)|=|V(W)|+1$. Yet $T(G)\ge T(W)-1$ since at most one $4$-cycle or triangle was destroyed by splitting $x$. Hence $p(G)\le p(W)+5-2\cdot 3 + 1 = p(W)$. As $p(G)\ge -1$, we have that $p(W)\ge -1$ and $W$ is exceptional.

Note that $W\ne K_4$ as all vertices of $K_4$ have degree three. If $W=H_7$, then $x$ is the unique vertex of degree four in $H_7$. But then there is only one split of $x$ up to symmetry and in that case $G$ is isomorphic to $T_{8}$, a contradiction.

So $p(W)=-1$. But then the calculations above imply equality throughout and hence $T(G)=T(W)-1$. This in turn implies that there are two disjoint $\le 4$-cycles in $W$, one using $x$ and the other using one of $y$ or $z$. Hence $W\ne W_5$ since $T(W_5)=1$. Similarly, $W\ne T_8,T_{11}$ since there does not exist a triangle in those graphs intersecting two such disjoint cycles of length at most four. 

So $W$ is $4$-Ore with $T(W)=3$ and $T(G)=T(W)-1=2$. Note that $G'=G-\{x_1y,x_2z\}$ can be obtained from $W$ by splitting the vertex $x$. By Proposition~\ref{4OreT3}, either $T(G')\ge 3$ or there exists a diamond in $G'$ such that neither $x_1$ nor $x_2$ is a vertex of degree three in the diamond. If $T(G')\ge 3$, then $T(G)\ge 3$, a contradiction. So there exists a diamond $H$ in $G'$ such that $x_1,x_2$ are not vertices of degree three in $H$. Furthermore, it follows that $y,z$ are not vertices of degree three in $H$ as otherwise $V(H)=\{x_1,x_2,y,z\}$ which does not induce a diamond in $G'$. But then $H$ is also a diamond in $G$ and hence $V(H)$ contradicts the choice of $R$ since the vertices of degree three in $R$ do not remain degree three in $G$ since $\lvert X \rvert = 3$.
\end{proof}

\begin{claim}\label{Pot4}
If $R\subsetneq V(G)$, $|R|\ge 4$, then $p_G(R)\ge p(G)+4$.
Furthermore, $p_G(R) \ge p(G)+5$ unless $G\setminus R$ is a single vertex of degree three in $G$ or contains a triangle consisting of vertices of degree three.
\end{claim}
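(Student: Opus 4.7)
The plan is to induct on $|V(G)\setminus R|$, taking a critical extension $R'$ of $R$ at each step and controlling the potential drop via Lemma~\ref{Extension}. If $R'\subsetneq V(G)$, then $|V(G)\setminus R'|<|V(G)\setminus R|$ and $|R'|\ge 4$, so the inductive hypothesis gives $p_G(R')\ge p(G)+4$; combined with $p_G(R)\ge p_G(R')+2$ from Claim~\ref{ExtDec}, we get $p_G(R)\ge p(G)+6$, surpassing both bounds of the claim. Thus both assertions reduce to the spanning case $R'=V(G)$, in which $p_G(R')=p(G)$ and the task becomes bounding $\Delta:=p_G(R)-p(G)$ from below.

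For the first inequality in the spanning case, Lemma~\ref{Extension} gives
$$\Delta\ \ge\ f(|X|)-p(W)-\bigl(T(W)-T(W\setminus X)\bigr),$$
where $W$ is the extender and $X$ the core. I would run a case analysis on $|X|\in\{1,2,3\}$, using $f(1)=5,f(2)=7,f(3)=6$; the consequence of Theorem~\ref{Main} that $p(W)\le 1$ with equality iff $W=K_4$, $p(W)=0$ iff $W=H_7$, and $p(W)\le -1$ otherwise; and sharp bounds on $T(W)-T(W\setminus X)$ (for instance, $|X|=1,\,W=K_4$ leaves $W\setminus X=K_3$ with its triangle intact, so $T(W)-T(W\setminus X)=0$ and $\Delta\ge 4$; $|X|=3,\,W=K_4$ gives $\Delta\ge 6-1-1=4$). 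Working through each case confirms $\Delta\ge 4$.

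For the furthermore clause, I would identify each tight subcase where $\Delta=4$ exactly and show the stated exception applies. Tightness forces the extension to be complete (else any excess edge contributes $\ge 3$ to $\Delta$), so $V(G)\setminus R=V(W)\setminus X$ and $\deg_G(v)=\deg_W(v)$ for every $v\in V(W)\setminus X$. The tight subcases are: (i) $|X|=3,\,W=K_4$, where $V(W)\setminus X$ is a single $W$-degree-$3$ vertex, yielding the first exception; (ii) $|X|=1,\,W=K_4$, where $V(W)\setminus X=K_3$ is a triangle of three $W$-degree-$3$ vertices, yielding the second exception; and (iii) $|X|=1,\,W=H_7$ with $T(W)-T(W\setminus X)=1$, where inspection of $H_7$'s triangles and its unique degree-$4$ vertex produces a surviving triangle of $W$-degree-$3$ vertices, again yielding the second exception. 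The would-be fourth subcase, $|X|=3$ with $W$ a $4$-Ore graph with $T(W)=3$, is vacuous: tightness forces $X$ to be a triangle in $W$, but then Proposition~\ref{2ndTriangle} gives $T(W\setminus X)\ge 1$, contradicting the requirement $T(W\setminus X)=0$.

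The main subtlety will be subcase (iii), which requires understanding $H_7$'s structure well enough to carry out the verification: $H_7$ has a unique degree-$4$ vertex lying in exactly two of its four triangles, so the remaining triangles are disjoint and consist entirely of degree-$3$ vertices. For each choice of $X=\{v\}$ with $T(W)-T(W\setminus X)=1$, at least one of these ``good'' triangles survives the removal of $v$, completing the verification of the second exception.
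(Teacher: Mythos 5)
Your proposal is correct and follows essentially the same route as the paper: take a critical extension $R'$ of $R$, apply Lemma~\ref{Extension} and Claim~\ref{ExtDec}, and run the case analysis over exceptional extenders $W$ and core sizes $|X|$ with sharp bounds on $T(W)-T(W\setminus X)$. The one genuine divergence is in your subcase (iii), $W=H_7$ and $|X|=1$ with $T(W)-T(W\setminus X)=1$: the paper dispatches this by observing that it forces a $K_4-e$ in $G$, contradicting Claim~\ref{NoK4e}, so the subcase is vacuous; you instead verify directly, by inspecting $H_7$'s four triangles and its unique degree-$4$ vertex, that a triangle of $W$-degree-$3$ vertices survives in $V(W)\setminus X$ and (by completeness of the extension) consists of $G$-degree-$3$ vertices. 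Both conclusions are compatible with the claim's ``unless'' clause; the paper's phrasing actually proves the subcase impossible, while yours merely places it in the allowed exception, so later arguments that rely on Claim~\ref{NoK4e} to exclude an $H_7$ extender would not follow from your version alone. The inductive framing on $|V(G)\setminus R|$ is a harmless repackaging of the paper's observation that non-spanning extensions cost an additional $2$ via Claim~\ref{ExtDec}.
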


\begin{proof}
As $R$ is a proper subset of $V(G)$ with $|R|\ge 4$, $R$ has a critical extension $R'$ with extender $W$ and core $X$. 
By Lemma~\ref{Extension}, $p_G(R')\le p_G(R)+p(W)-f(|X|) +T(W)- T(W\setminus X)$. 
By Claim~\ref{ExtDec}, $p(G)\le p_G(R')$.

First suppose that $W$ is not exceptional. 
By the minimality of $G$, $p(W)\le -2$. 
But then $p(G)\le p_G(R) -2 -f(|X|) + |X|$ which is at most $p_G(R)-5$ as desired. 
So we may assume that $W$ is exceptional.

Suppose that $W=K_4$. 
Then $T(W)-T(W\setminus X)$ is $0$ if $|X|=1$ and $1$ if $|X|=2$ or $3$. 
Hence $p_G(R')\le p_G(R)+1-5/7/6 + 0/1/1 = p_G(R) - 4/5/4$. 
Furthermore if the extension is not spanning, then $p(G)\le p_G(R')-2$ by Claim~\ref{ExtDec} and hence $p(G)\le p_G(R)-6$ as desired. 
Similarly, if the extension is incomplete, then $p(G) \leq p_G(R)-6$.
Thus we are done unless the extension is total and $|X|=1$ or $3$. 
When $\lvert X \rvert=1$, $G\setminus R$ must be a triangle consisting of vertices of degree three, while $\lvert X \rvert=3$ implies that $G\setminus R$ is a vertex of degree three in $G$ as desired since the extension is total.

Suppose that $W=H_7$.
Note then that $T(W)-T(W\setminus X) \leq 1$ for any size of $X$. 
As $p(H_7)=0$ and $T(H_7)=2$, $p(G)\le p_G(R) -5/7/6 + 1 = p_G(R) - 4/6/5$. 
So $|X|=1$ and $T(W)-T(W \setminus X)=1$. 
But then $G$ contains a $K_4-e$, contradicting Claim~\ref{NoK4e}. 

Suppose that $W=W_5$. 
As $p(W_5)=-1$ and $T(W_5)=1$, $p(G)\le p_G(R) -1 - 5/7/6 + 1 \le p_G(R)-5$ as desired.

Suppose that $W=T_{8}$ or $T_{11}$. 
As $p(W)=-1$ and $T(W)=2$, $p(G)\le p_G(R) -1 -5/7/6 + 1/2/2 \le p_G(R)-5$ as desired.

Finally we may suppose that $W$ is $4$-Ore and $T(W)=3$. 
As $p(W)=-1$ and $T(W)=3$, $p(G)\le p_G(R) -1 -5/7/6 + 1/2/3 = p_G(R) - 5/6/4$. 
So we are undone unless $|X|=3$. 
However, in that case it follows from Proposition~\ref{2ndTriangle} that $T(W-X)\ge 1$. 
Hence $T(W)-T(W-X)\le 2$. 
Consequently, $p(G)\le p_G(R) -1 -6 + 2 = p_G(R)-5$ as desired.
\end{proof}

\begin{definition}
We say $u,v\in V(G)$ is an \emph{identifiable pair} in a proper subset $R$ of $V(G)$ if $G[R]+uv$ is not $3$-colorable.
\end{definition}


\begin{claim}\label{NoIdPair}
There does not exist an identifiable pair in a proper subset of $V(G)$.
\end{claim}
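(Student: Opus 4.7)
My plan is to argue by contradiction, combining Claim~\ref{Pot4} with Claim~\ref{NoK4e}. Suppose some identifiable pair $u,v$ exists in a proper subset $R \subsetneq V(G)$. Then $G[R]+uv$ is not $3$-colorable; since $G[R]$ is $3$-colorable (as $R \subsetneq V(G)$), any $4$-critical subgraph $W$ of $G[R]+uv$ must use the edge $uv$, and in particular $uv \notin E(G)$. By replacing $R$ with $V(W)$ (which preserves the identifiable-pair property since $W \subseteq G[V(W)]+uv$), I will assume $R = V(W)$; in particular $|R| \ge 4$ and $R \subsetneq V(G)$.

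The first step will be to bound $p_G(R)$ from above using $W$. Since $W-uv \subseteq G[R]$ and $T(W-uv) \ge T(W)-1$, a direct calculation gives $p_G(R) \le p(W)+4$. Claim~\ref{Pot4} then gives the matching lower bound $p_G(R) \ge p(G)+4 \ge 3$. Combining these forces $p(W) \ge -1$, so by the minimality of $G$ and Theorem~\ref{Main} the graph $W$ must be exceptional, i.e., $W$ is one of $K_4$, $H_7$, $W_5$, $T_8$, $T_{11}$, or a $4$-Ore graph with $T(W)=3$.

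The rest will be a case analysis. For $W=K_4$ the subgraph $W-uv = K_4-e$ sits inside $G[R]$, contradicting Claim~\ref{NoK4e}. For $W=H_7$ the two triangles of $H_7$ yield two edge-disjoint diamonds, so $W-uv$ still contains a $K_4-e$, again contradicting Claim~\ref{NoK4e}. For $W \in \{T_8, T_{11}\}$ or $W$ $4$-Ore with $T(W)=3$, Proposition~\ref{SameOrK4e} applied to $uv$ yields either $K_4-e \subseteq W-uv$ (contradicting Claim~\ref{NoK4e}) or $T(W-uv)=T(W)$; in the latter case the upper bound tightens to $p_G(R) \le p(W)+3 = 2$, contradicting $p_G(R) \ge 3$. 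For $W=W_5$ a short direct check shows $T(W_5-uv)=1=T(W_5)$ for every edge $uv$ (every edge of $W_5$ lies in many triangles, enough of which survive any single-edge deletion), so the same sharper bound gives the same contradiction.

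The hard part will be the two exceptional graphs $H_7$ and $W_5$ that Proposition~\ref{SameOrK4e} does not cover; each requires a brief structural inspection to confirm either that a diamond survives the deletion of any one edge or that $T$ is unchanged. Everything else follows from the two bounds on $p_G(R)$ together with the already-established claims.
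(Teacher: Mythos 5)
Your proof is correct and follows essentially the same route as the paper: bound $p_G(V(W))$ above by $p(W)+4$ and below by $p(G)+4\ge 3$, conclude $W$ is exceptional, then eliminate the exceptional graphs using Claim~\ref{NoK4e} and Proposition~\ref{SameOrK4e}. The only cosmetic difference is in the $W_5$ case: the paper disposes of $W_5$ (along with $K_4$, $H_7$, $T_{11}$) by noting a $K_4-e$ survives any single edge deletion, whereas you argue $T(W_5-uv)=T(W_5)$ to get the sharper bound $p_G(V(W))\le p(W)+3$; both arguments are valid and give the same contradiction.
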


\begin{proof}
Suppose not. Let $u,v$ be an identifiable pair in a proper subset $R$ of $V(G)$. Since $G[R]+uv$ is not $3$-colorable, there exists
 a $4$-critical subgraph $K$ of $G[R]+uv$. 
As $G$ is $4$-critical, $u,v\in V(K)$ and $uv\in E(K)$.  
Note that $p_G(V(K))= p(K-uv)\le p(K)+4$ since at most one edge is deleted and at most one triangle or $4$-cycle is lost by that edge deletion.
On the other hand, $p_G(V(K)) \geq p(G) +4 \geq 3$ by Claim~\ref{Pot4}.

First suppose that $K$ is not exceptional. 
By the minimality of $G$, $p(K)\le -2$. 
But then $p_G(V(K))\le -2+4 = 2$, a contradiction. 
So $K$ is exceptional.
It follows from Claim~\ref{NoK4e}, that $K\ne K_4, H_7, W_5,T_{11}$ since $K-uv \subseteq G$. 
So $K=T_8$ or $K$ is $4$-Ore with $T(K)=3$.
By Proposition~\ref{SameOrK4e}, either $K_4-e$ is a subgraph of $K-uv$, contradicting Claim~\ref{NoK4e}, or $T(K)=T(K-uv)$. 
The latter implies that $p_G(V(K))\le p(K)+3 = -1 + 3 = 2$, a contradiction. 
%
\end{proof}

\begin{claim}\label{OneInATriangle}
Every triangle in $G$ contains at most one vertex of degree three.
\end{claim}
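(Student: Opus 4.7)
The plan is to argue by contradiction via an identifiable pair. Suppose some triangle $T=xyz$ contains two degree-three vertices, say $x$ and $y$, and let $x',y'$ be the third neighbors of $x,y$ lying outside $T$. First I would rule out $x'=y'$: in that case $\{x,y,z,x'\}$ would induce either $K_4$ or $K_4-e$ in $G$. The former forces $G=K_4$ since $G$ is $4$-critical, contradicting that $G$ is non-exceptional; the latter contradicts Claim~\ref{NoK4e}. Since $x',y'\notin\{x,y,z\}$, both lie in $R:=V(G)\setminus\{x,y\}$, which is a proper subset of $V(G)$.

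Because $G$ is $4$-critical, $G[R]$ is $3$-colorable. I would then show that every $3$-coloring $\phi$ of $G[R]$ must satisfy $\phi(x')=\phi(y')$. Indeed, since $\deg(x)=\deg(y)=3$ and the only edges of $G$ incident to $\{x,y\}$ are $xy$, $xz$, $yz$, $xx'$, $yy'$, extending $\phi$ to $G$ amounts to choosing $\phi(x)\notin\{\phi(z),\phi(x')\}$ and $\phi(y)\notin\{\phi(z),\phi(y')\}$ with $\phi(x)\neq\phi(y)$; a brief case check on the equality pattern of $\phi(z),\phi(x'),\phi(y')$ shows that such an extension always exists whenever $\phi(x')\neq\phi(y')$. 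Since $G$ is not $3$-colorable, $\phi(x')=\phi(y')$ must hold for every $3$-coloring $\phi$ of $G[R]$.

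Consequently $G[R]+x'y'$ is not $3$-colorable, so $\{x',y'\}$ is an identifiable pair in the proper subset $R$, contradicting Claim~\ref{NoIdPair}. The conceptual heart is that after deleting both low-degree vertices of the triangle, the only possible obstruction to extending a $3$-coloring lies in the pair $\{x',y'\}$, which Claim~\ref{NoIdPair} forbids; the main technical step is the short case check verifying that extensions succeed whenever $\phi(x')\neq\phi(y')$, together with the preliminary observation that $x'=y'$ would already produce a forbidden $K_4-e$.
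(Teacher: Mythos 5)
Your proof is correct and uses the same core idea as the paper: show that the outside neighbors $x',y'$ of the two degree-three triangle vertices form an identifiable pair, then invoke Claim~\ref{NoIdPair}. The one difference is the choice of set: you take $R = V(G)\setminus\{x,y\}$, retaining the third triangle vertex $z$ in $R$, whereas the paper takes $R = V(G)\setminus V(T)$. Your choice is in fact the cleaner one --- keeping $z$ inside $R$ is precisely what makes the extension step work, since then only the two degree-three vertices $x,y$ need to be colored and your short case check on $\phi(z),\phi(x'),\phi(y')$ settles it; the paper's phrasing, which removes $z$ as well, leaves unaddressed how $z$ (of arbitrary degree) would be colored when trying to extend a $3$-coloring of $G\setminus V(T)+ab$ back to $G$.
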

\begin{proof}
Suppose there exists a triangle $T$ containing two vertices $u,v$ of degree three. Let $a$ be the neighbor of $u$ not in $T$. Let $b$ be the neighbor of $v$ not in $T$. Since $G$ does not contain a $K_4-e$ by Claim~\ref{NoK4e}, $a\ne b$. Yet $G\setminus V(T) + ab$ is not $3$-colorable as otherwise $G$ is $3$-colorable. Thus $a,b$ is an identifiable pair in $V(G-T)$, contradicting Claim~\ref{NoIdPair}.
\end{proof}

Claim~\ref{OneInATriangle} allows us to strengthen the outcome of Claim~\ref{Pot4} as follows:

\begin{claim}\label{Pot5}
If $R\subsetneq V(G)$, $|R|\ge 4$ and $R'$ is a critical extension of $R$, then $p_G(R) \ge p_G(G)+5$ unless $G\setminus R$ is a single vertex of degree three in $G$.
\end{claim}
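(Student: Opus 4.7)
The plan is to deduce Claim~\ref{Pot5} as a direct consequence of Claim~\ref{Pot4} combined with Claim~\ref{OneInATriangle}. Claim~\ref{Pot4} already shows $p_G(R) \ge p(G)+4$ in general, with the stronger inequality $p_G(R) \ge p(G)+5$ failing only in two exceptional situations: either $G\setminus R$ is a single vertex of degree three in $G$, or $G\setminus R$ contains a triangle all of whose vertices have degree three in $G$. The first of these is precisely the exception retained in the statement of Claim~\ref{Pot5}, so the entire task reduces to eliminating the second case.

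To eliminate the triangle case, I would simply invoke Claim~\ref{OneInATriangle}, which states that every triangle of $G$ contains at most one vertex of degree three. Consequently no triangle in $G$ — in particular none lying inside $G\setminus R$ — can consist of three degree-three vertices, so the second exception in Claim~\ref{Pot4} cannot arise in our counterexample $G$. Only the first exception survives, giving exactly the conclusion of Claim~\ref{Pot5}.

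The one point that warrants a moment of care is to confirm that the phrase ``triangle consisting of vertices of degree three'' in Claim~\ref{Pot4} really refers to degrees in $G$, rather than merely in $G[R']$ or in the extender $W$. Revisiting the proof of Claim~\ref{Pot4}, this triangle is produced in the case $W=K_4$ with $|X|=1$ and the extension being \emph{total} as well as \emph{complete}; completeness forces each vertex of $W\setminus X$ to have exactly the same neighbors in $G$ as in $W$, so each such vertex is of degree three in $G$ itself. Thus Claim~\ref{OneInATriangle} is applicable verbatim, and the proof of Claim~\ref{Pot5} requires no additional ingredients beyond these two previous claims.
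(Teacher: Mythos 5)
Your argument is correct and is exactly what the paper intends: the paper in fact gives no explicit proof of Claim~\ref{Pot5}, merely prefacing it with the remark that Claim~\ref{OneInATriangle} allows the outcome of Claim~\ref{Pot4} to be strengthened, which is precisely the deduction you spell out. Your added care in checking that the degree-three triangle in Claim~\ref{Pot4} refers to degrees in $G$ (via completeness of the total extension) is a correct and worthwhile observation.
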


Similarly we can now exclude all cycles of vertices of degree three.
We define $D_3(G)$ to be the subgraph of $G$ induced by the vertices of degree three.
%
%

\begin{claim}\label{Acyclic}
$D_3(G)$ is acyclic.
\end{claim}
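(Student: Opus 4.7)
The plan is to argue by contradiction. I would assume $D_3(G)$ contains a cycle and take a shortest such cycle $C=v_1v_2\cdots v_k$; by minimality $C$ must be induced in $G$, since any chord would be an edge between two degree-$3$ vertices and thus lie in $D_3(G)$, creating a strictly shorter cycle. Each $v_i$ therefore has exactly two neighbors on $C$ and a unique neighbor $w_i$ in $R:=V(G)\setminus V(C)$. Claim~\ref{OneInATriangle} rules out $k=3$ and also rules out $w_i=w_{i+1}$ for any $i$ (otherwise $v_iv_{i+1}w_i$ would be a triangle containing two degree-$3$ vertices). In particular, the vertices $w_i$ are not all the same, so some pair $w_i\neq w_j$ exists in $R$.

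The main step is to reduce to a list-coloring problem on $C$. Since $G[R]$ is a proper subgraph of the $4$-critical graph $G$, it admits a proper $3$-coloring $\phi$, and extending $\phi$ to a $3$-coloring of $G$ is equivalent to properly list-coloring $C$ with the size-$2$ lists $L(v_i):=\{1,2,3\}\setminus\{\phi(w_i)\}$. I would invoke the standard list-coloring dichotomy: an even cycle with $2$-lists is always list-colorable, while an odd cycle with $2$-lists drawn from a $3$-element palette is list-colorable unless all of its lists coincide. The latter statement follows from a short greedy walk starting at an edge $v_iv_{i+1}$ where $L(v_i)\neq L(v_{i+1})$ (pick a color in the symmetric difference and propagate around $C$).

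If $k$ is even, then for \emph{any} choice of $\phi$ the list-coloring succeeds, producing a $3$-coloring of $G$ and contradicting the $4$-criticality of $G$. If $k$ is odd (so $k\geq 5$), the same argument yields a contradiction provided some $\phi$ assigns the vertices $w_1,\ldots,w_k$ at least two distinct colors.

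The remaining -- and, I expect, the only delicate -- case is when $k$ is odd and in \emph{every} $3$-coloring of $G[R]$ all the vertices $w_i$ receive one common color. In that case, fixing some $w_i\neq w_j$ guaranteed by the first paragraph, every $3$-coloring of $G[R]$ satisfies $\phi(w_i)=\phi(w_j)$, so $G[R]+w_iw_j$ is not $3$-colorable. This is precisely the statement that $\{w_i,w_j\}$ is an identifiable pair in the proper subset $R\subsetneq V(G)$, directly contradicting Claim~\ref{NoIdPair} and closing the proof. The one conceptual step is this final translation from ``all external neighbours are forced to the same color'' into an identifiable pair; everything else is list-coloring bookkeeping built on the two structural claims already established.
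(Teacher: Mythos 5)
Your proof is correct and follows essentially the same strategy as the paper: both arguments hinge on showing that two distinct external neighbors of a cycle in $D_3(G)$ would form an identifiable pair (via the list-coloring observation that an odd cycle with 2-lists from a 3-element palette fails to be colorable only when all lists coincide), and then invoking Claim~\ref{NoIdPair}. The paper's version is much terser and, after establishing the identifiable-pair statement, concludes $|N(C)|=1$ and derives a $K_4-e$ to contradict Claim~\ref{NoK4e}; you instead use Claim~\ref{OneInATriangle} to rule out $k=3$ and $w_i=w_{i+1}$, so that two distinct external neighbors exist a priori and the $K_4-e$ endgame is unnecessary. You also make explicit two things the paper leaves implicit: that the cycle should be taken shortest (hence induced in $G$, which is needed for the 2-list argument to apply cleanly), and the list-coloring dichotomy itself. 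These are refinements rather than a different route; the core reduction to Claim~\ref{NoIdPair} is identical.
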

\begin{proof}
Suppose that $D_3$ contains a cycle $C$. Then every two distinct vertices $u,v$ in $N(C)$ is an identifiable pair in $V(G-C)$. Hence by Claim \ref{NoIdPair}, $|N(C)|=1$. But then $G$ contains a $K_4-e$ subgraph contradicting Claim~\ref{NoK4e}. 
\end{proof}


\begin{definition}
The \emph{$H_7$-gadget} is the graph shown in Figure~\ref{GraphH7}. We say $u$ is the \emph{end} of the $H_7$-gadget. Note $H\setminus u$ is obtained from $H_7$ by splitting the vertex of degree four in $H_7$.
\end{definition}

\begin{figure}
\unitlength=1.2mm
\begin{picture}(40, 40)(-50,0)

\multiput(0,0)(40,0){2}{\circle*{2}}
\multiput(10,10)(20,0){2}{\circle*{2}}
\multiput(0,20)(40,0){2}{\circle*{2}}
\multiput(20,20)(0,10){3}{\circle*{2}}
\drawline[1000](0,0)(40,0)(20,20)(0,0)(0,20)(20,40)(40,20)(40,0)
\drawline[1000](0,20)(10,10)
\drawline[1000](20,20)(20,40)
\drawline[1000](30,10)(40,20)
\put(22,20){$a$}
\put(22,30){$u$}
\put(22,40){$b$}

\end{picture}

\caption{$H_7$-gadget with end $u$.}    \label{GraphH7}
\end{figure}
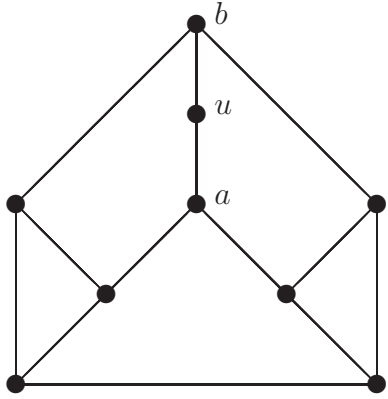

\begin{claim}\label{Gadget}
If $u$ is a vertex of degree three in $G$ with neighbors $a,b,v$, $v$ is of degree three in $G$ and adjacent to another vertex $w\ne u$ of degree three in $G$, then either $a$ is adjacent to $b$, or $a$ and $b$ are in an $H_7$-gadget with end $u$ not containing $v$.
\end{claim}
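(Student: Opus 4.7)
The plan is to assume for contradiction that $a \not\sim b$ and derive the $H_7$-gadget structure by identifying $a$ with $b$ in $G - u$. First, Claim~\ref{OneInATriangle} implies $a \not\sim v$ and $b \not\sim v$, since otherwise $\{u, a, v\}$ or $\{u, b, v\}$ would be a triangle containing the two degree-three vertices $u$ and $v$. Consequently, in every 3-coloring $\phi$ of $G - u$ the colors $\phi(a), \phi(b), \phi(v)$ must be three distinct colors (else $\phi$ extends to a 3-coloring of $G$ by assigning $u$ the remaining color), and in particular $\phi(a) \ne \phi(b)$ for every such $\phi$.

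Consider the identification $G^* = (G - u)/\{a, b\}$ that merges $a, b$ into a single vertex $a^*$. Since no 3-coloring of $G - u$ has $\phi(a) = \phi(b)$, the graph $G^*$ is not 3-colorable and hence contains a 4-critical subgraph $K$ with $a^* \in V(K)$. Set $V' = (V(K) \setminus \{a^*\}) \cup \{a, b\} \subsetneq V(G)$. Note that $v \not\in V(K)$: since $v \not\sim a, b$, the vertex $v$ has only two neighbors in $G^*$, namely $w$ and $v$'s third neighbor $v'$, so $\deg_{G^*}(v) \le 2$ and $v$ cannot lie in a 4-critical subgraph. Hence $V(G) \setminus V' \supseteq \{u, v\}$ has at least two elements, so Claim~\ref{Pot5} gives $p_G(V') \ge p(G) + 5 \ge 4$. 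On the other hand, writing $\alpha$ for the number of $c \in V(K) \setminus \{a^*\}$ adjacent in $G$ to both $a$ and $b$, one has $|V'| = |V(K)| + 1$ and $|E(G[V'])| \ge |E(K)| + \alpha$, so $p_G(V') \le p(K) + T(K) + 5 - 3\alpha$. Combining these two bounds, $p(K) + T(K) \ge -1 + 3\alpha$.

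Now pin down $K$. The Kostochka--Yancey bound $p(K) + T(K) \le 2$ applied to the 4-critical $K$ forces $\alpha \le 1$. Rule out $K = K_4$ by case analysis on the distribution of adjacencies between the three non-$a^*$ vertices of $K_4$ and the pair $\{a, b\}$ (``all three to one side,'' ``two to one side and one to the other,'' ``one to each side with a shared neighbor,'' and so on): every case produces a $K_4$ or $K_4 - e$ subgraph of $G$ on a set such as $\{a, c_1, c_2, c_3\}$, contradicting Claim~\ref{NoK4e}. Continuing the case analysis on the remaining exceptional possibilities ($W_5$, $T_8$, $T_{11}$, and $4$-Ore graphs with $T(K) = 3$), and invoking Claims~\ref{NoK4e}, \ref{OneInATriangle}, \ref{NoIdPair} together with the hypothesis that $w$ has degree three, eliminates all except $K = H_7$ with $a^*$ being the unique degree-four vertex of $H_7$.

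Finally, reconstruct the gadget. Since $\alpha = 0$ (otherwise a $K_4 - e$ arises in $G$ on $\{a, b, c, c'\}$ where $c, c'$ are adjacent neighbors of $a^*$ in $H_7$, contradicting Claim~\ref{NoK4e}), each of the four neighbors $p, q, s, t$ of $a^*$ in $H_7$ is adjacent in $G$ to exactly one of $a, b$. Claim~\ref{NoK4e} further enforces the ``nice'' split $\{p, q\} \sim a$ and $\{s, t\} \sim b$, matching the two triangles $\{a^*, p, q\}$ and $\{a^*, s, t\}$ of $H_7$, since any ``twisted'' alternative (e.g.\ $\{p, s\} \sim a$ and $\{q, t\} \sim b$) contradicts Claim~\ref{NoK4e} after further identification or forces $a$ to have degree at least four in a way that reproduces the forbidden structure. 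The remaining structure of $H_7$---the vertex $x$ with $x \sim p, q, r$ and the triangle $\{r, s, t\}$---transfers directly to $G$, and together with the edges $ua$ and $ub$ yields the $H_7$-gadget on $V' \cup \{u\}$, which by construction excludes $v$. The main obstacle is pinning down $K = H_7$ with $a^*$ correctly placed and confirming the nice split, which requires a careful interplay of the potential inequality with Claims~\ref{NoK4e} and \ref{NoIdPair} together with the degree-three path structure $u$--$v$--$w$ in $D_3(G)$.
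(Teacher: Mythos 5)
Your proof follows the same broad architecture as the paper's: identify $a$ and $b$ in $G-u$, extract a $4$-critical subgraph $K$ of the identification, use a potential estimate together with Claim~\ref{Pot5} to constrain $K$, and then analyze the structure. The preliminary observations (that $a,b\not\sim v$ by Claim~\ref{OneInATriangle}, hence $v$ has degree two in $G^*$ and lies outside $K$, and hence $V(G)\setminus V'$ has at least two elements) are all correct and match the paper.

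However, there is a genuine gap in the potential estimate, caused by leaving $u$ out of the set $V'$. You take $V' = (V(K)\setminus\{a^*\})\cup\{a,b\}$, so $|V'| = |V(K)|+1$ and $|E(G[V'])|\ge |E(K)|+\alpha$. Even using the sharper bound $T(G[V'])\ge T(K)-1$ (you drop the $T$-term entirely), this gives only $p_G(V')\le p(K)+6-3\alpha$; combined with $p_G(V')\ge 4$ you obtain $p(K)\ge -2+3\alpha$, which for $\alpha=0$ is $p(K)\ge -2$. That does \emph{not} rule out a non-exceptional $K$ (any $4$-critical graph with $p(K)=-2$ is allowed by Theorem~\ref{Main}, and the minimality of $G$ gives no more). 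Yet the next stage of your argument silently assumes $K$ is one of $K_4, H_7, W_5, T_8, T_{11}$, or $4$-Ore with $T(K)=3$. The inequality $p(K)+T(K)\ge -1+3\alpha$ that you actually derive bounds $\alpha\le 1$ via Kostochka--Yancey, but it never bounds $p(K)$ itself from below by $-1$. The paper avoids this by putting $u$ into the set: with $R=(V(K)\setminus\{c\})\cup\{a,b,u\}$ one has $|R|=|V(K)|+2$ and $|E(G[R])|\ge |E(K)|+2$ from the two extra edges $ua, ub$, so $p_G(R)\le p(K)+5$, and then $p_G(R)\ge p(G)+5\ge 4$ yields $p(K)\ge -1$ directly, which is exactly what is needed to invoke minimality and conclude $K$ is exceptional. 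Adding $u$ and its two edges costs $5-3\cdot 2=-1$ in potential, and that single unit is what closes the argument; without it the case $\alpha=0$ is out of reach. (The subsequent case analysis you sketch is also rather compressed, but the missing potential bound is the essential flaw.)
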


\begin{proof}
Suppose not. 
That is, $a$ is not adjacent to $b$ and yet they are not in an $H_7$-gadget with end $u$. 
Let $G'$ be obtained from $G$ by deleting $u$ and identifying $a$ and $b$ to a vertex $c$. 
Note that $G'$ is not $3$-colorable and hence contains a $4$-critical subgraph $K$.
Observe that $K$ contains $c$.

Let $R=(V(K)-\{c\}) \cup \{a,b,u\}$. Since $G[R]$ contains two more vertices and at least two more edges than $K$, it follows that $p_G(R)\le p(K)+4 + T(K)-T(G[R]) \le p(K)+5$. 
Moreover, $v,w\not\in K$, since $v$ has degree at most two in $G'$ and $w$ has degree at most two in $G'-v$. 
Hence $\lvert V(G)\setminus R \rvert \geq 2$.
By Corollary~\ref{Pot5}, $p_G(R) \geq p(G)+5 \geq 4$.
So $p(K) \geq -1$ and $K$ is exceptional. Furthermore, if $p(K)=-1$, then $T(K)=T(G[R])+1$.
 
Let $H=G[R]$. Note that $H\setminus\{c\}$ is obtained from $K$ by splitting a vertex. If $K=K_4$, then $H$ contains $K_4-e$ as a subgraph, contradicting Claim~\ref{NoK4e}. If $K=H_7$, then $H$ contains $K_4-e$ as a subgraph, contradicting Claim~\ref{NoK4e}, unless the vertex of degree four is split in such a way that $H$ is an $H_7$-gadget with end $u$, a contradiction.

So $p(K)=-1$. Hence, $T(K)-T(H)=1$.
By Proposition \ref{4OreT3}, $K$ is not $4$-Ore with $T(K)=3$, otherwise $H$ contains a $K_4-e$, contradicting Claim~\ref{NoK4e}, or $T(H)\ge 3=T(K)$, a contradiction.
However, if $K=T_8$ or $T_{11}$, then either $T(K)=T(H)$ or $H$ contains a $K_4-e$ subgraph, which in either case yields a contradiction.
\end{proof}

\begin{claim}\label{Path5}
$G$ does not contain a path of five vertices of degree three.
\end{claim}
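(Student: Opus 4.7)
The plan is to argue by contradiction. Suppose $G$ contains a path $P = v_1 v_2 v_3 v_4 v_5$ of five degree-three vertices, and let $a$, $c$, $b$ denote the unique off-path neighbors of $v_2$, $v_3$, $v_4$ respectively. I would apply Claim~\ref{Gadget} at each of the three interior vertices of $P$, using the path itself to supply the required degree-three neighbor-of-neighbor (for example, at $u = v_2$ one takes $v = v_3$ and $w = v_4$). In each application, the ``adjacency'' alternative of Claim~\ref{Gadget} produces an edge between two neighbors of $u$, hence a triangle through two consecutive degree-three vertices of $P$, contradicting Claim~\ref{OneInATriangle}. Thus the ``gadget'' alternative must hold at each interior vertex, giving $H_7$-gadgets $\Gamma_L$ (end $v_2$, attachments $v_1$ and $a$, avoiding $v_3$), $\Gamma_M$ (end $v_3$, attachments $v_4$ and $c$, avoiding $v_2$), and $\Gamma_R$ (end $v_4$, attachments $v_5$ and $b$, avoiding $v_3$).

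The next step is to exploit the internal asymmetry of the $H_7$-gadget visible in Figure~\ref{GraphH7}: one of the two attachment vertices has its two non-end gadget-neighbors adjacent within the gadget, while the other does not. If any of the path endpoints $v_1$, $v_5$ (or $v_4$ in $\Gamma_M$) plays the role of the first, ``triangle-forcing'' attachment in its gadget, then since these vertices have degree three in $G$ their $G$-neighbors coincide with their gadget-neighbors, producing a triangle through two consecutive degree-three vertices of $P$ and again contradicting Claim~\ref{OneInATriangle}. This dispatches a number of subcases and forces the remaining configuration in which $a$, $c$, $b$ play the triangle-forcing attachment role in their respective gadgets, rigidly pinning down the local structure.

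In the surviving configuration I would set $R = V(\Gamma_L) \cup V(\Gamma_M) \cup V(\Gamma_R)$ and compute $p_G(R)$ explicitly from the known vertex, edge, and short-cycle counts of the $H_7$-gadget (derived from $H_7$ via splitting its degree-four vertex and attaching the end). By Claim~\ref{Pot5}, $p_G(R) \geq p(G) + 5 \geq 4$ unless $G \setminus R$ is empty or a single degree-three vertex, which I would handle directly by observing that the resulting $G$ either contains $K_4 - e$ (contradicting Claim~\ref{NoK4e}) or is one of the exceptional graphs excluded at the outset. The explicit gadget count, incorporating the forced adjacencies from the previous step, would fall strictly below $4$, yielding the contradiction.

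\noindent\textbf{Main obstacle.} The principal difficulty is the bookkeeping for how the three $H_7$-gadgets overlap with one another and with the path: while $\Gamma_L$ and $\Gamma_R$ both avoid $v_3$ and hence are largely decoupled, they may share internal vertices with $\Gamma_M$, and the induced contributions to $T(G[R])$ must be carefully tracked. A secondary delicacy is the boundary case where $R$ almost exhausts $V(G)$, as there the slack in Claim~\ref{Pot5} is tight and ruling out the remaining configurations will require the rigidity of the gadget together with Claims~\ref{NoK4e} and~\ref{Acyclic}.
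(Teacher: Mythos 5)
Your proposed proof rests on a structural premise about the $H_7$-gadget that is false, and this gap propagates through the entire case analysis.

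You claim that in the $H_7$-gadget ``one of the two attachment vertices has its two non-end gadget-neighbors adjacent within the gadget, while the other does not.'' In fact the gadget in Figure~\ref{GraphH7} is symmetric under swapping $a$ and $b$ (there is an automorphism fixing $u$ and the two ``bottom'' vertices and exchanging the two triangles), and for \emph{both} attachment vertices the two non-end gadget-neighbors are non-adjacent: $a$ is adjacent inside the gadget to the two vertices on the lower $5$-cycle, which are not adjacent to each other, and likewise for $b$. So there is no ``triangle-forcing attachment,'' the subcase analysis you describe never dispatches anything, and the ``surviving configuration'' in which $a,c,b$ are triangle-forcing does not exist. (Also, even if an attachment vertex were in a gadget-triangle, that triangle would not contain two consecutive vertices of $P$, so your appeal to Claim~\ref{OneInATriangle} at that point would not directly produce a contradiction.)

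The observation that actually does the work in the paper is the complementary one, and you have it inverted. Apply Claim~\ref{Gadget} at $u=v_3$ to get a gadget $H$ with end $v_3$ containing $v_2$ and $x_3$; since $v_2$ has degree three, all three of its $G$-neighbors $v_1,v_3,x_2$ lie in $H$, and $v_1$ plays the role of a non-end gadget-neighbor of the attachment vertex $v_2$. Those non-end neighbors of $a$ (and of $b$) in the gadget \emph{are} in triangles of the gadget, so $v_1$ lies in a triangle of $G$. Now apply Claim~\ref{Gadget} at $u=v_2$ to get a second gadget $H'$ with end $v_2$ in which $v_1$ plays the attachment role; inside $H'$, $v_1$ is \emph{not} in a triangle. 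Since $v_1$ has degree three and \emph{is} in a triangle of $G$, the edge $f$ closing that triangle (between the two neighbors of $v_1$ other than $v_2$) lies in $G$ but not in $H'$. Then $p_G(V(H')) \le 5\cdot 9 - 3\cdot 14 - T(H') = 1$, contradicting Claim~\ref{Pot4} since $v_3\notin V(H')$. Your plan of taking $R=V(\Gamma_L)\cup V(\Gamma_M)\cup V(\Gamma_R)$ and computing $p_G(R)$ would additionally require controlling overlaps among the three gadgets, which (as you note) is unaddressed; but the decisive issue is that the asymmetry you rely on does not exist, so the plan cannot be carried out as written.
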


\begin{proof}
Let $P=v_1v_2v_3v_4v_5$ be a path of vertices of degree three in $G$. 
Let $x_3$ be the neighbor of $v_3$ not in $P$ (this exists by Claim~\ref{Acyclic}). 
By Claim~\ref{OneInATriangle}, $v_3$ is not in a triangle. 
Hence by Claim~\ref{Gadget} there exists an $H_7$-gadget $H$ with end $v_3$ containing $v_2$ and $x_3$. 
However, as $v_2$ is of degree three, it follows that $v_1$ is in $H$. 
Indeed $v_1$ is in a triangle. 

Yet we also find by Claim~\ref{OneInATriangle}, $v_2$ is not in triangle. 
Let $x_2$ be the neighbor of $v_2$ not in $\{v_1,v_3\}$.
Hence by Claim~\ref{Gadget}, $G$ contains an $H_7$-gadget $H'$ with end $v_2$ containing $v_1$ and $x_2$. 
Since $v_1$ is not in a triangle in $H'$ and $v_1$ is of degree three, this implies that there is an edge $f$ between the two neighbors of $v_1$ distinct from $v_2$ that is not in $H'$. 
But then since $|V(H')|=9$, $|E(H'+f)|=14$ and $T(H')=2$, it follows that $p_G(V(H'))\le 5 \cdot 9-3 \cdot 14 -2 = 1$, a contradiction.
\end{proof}

\begin{claim}
If $C$ is a component of $D_3(G)$, then $|C|\le 6$. 
\end{claim}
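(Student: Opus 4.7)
The plan is to use the three structural facts about $D_3(G)$ already established to reduce the problem to an elementary tree classification. By Claim~\ref{Acyclic}, $D_3(G)$ is acyclic, so the component $C$ is a tree. Since every vertex of $C$ has degree three in $G$, it certainly has degree at most three in $C$. Finally, by Claim~\ref{Path5}, $G$ contains no path on five vertices of degree three; such a path would in particular be a path in $D_3(G)$, so the longest path in $C$ has at most four vertices, i.e.\ $C$ has diameter at most three.

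Next I would carry out the standard classification of trees of small diameter, restricted to maximum degree three. If $C$ has diameter at most two, then $C$ is a star with a single center of degree at most three, giving $|C|\le 4$. If $C$ has diameter exactly three, then $C$ is a double star: there is an edge $uv$ in $C$ such that every other vertex of $C$ is a leaf adjacent to either $u$ or $v$. Since $u$ has degree at most three in $C$, it is adjacent to at most two leaves apart from $v$; the same applies to $v$. Hence $|C|\le 2+2+2=6$.

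The step that requires the least work is the classification itself, and the only mildly non-obvious point is confirming that ``path of five vertices of degree three'' in Claim~\ref{Path5} forces the diameter of $C$ (measured as a subgraph of $D_3(G)$) to be at most three; this is immediate since consecutive vertices along a longest path in $C$ are adjacent in $D_3(G)$, so a diameter-$d$ component witnesses a path on $d+1$ degree-three vertices. There is no genuine obstacle here: all the hard work has already been done in Claims~\ref{Acyclic} and~\ref{Path5}, and this claim is just a short combinatorial consequence.
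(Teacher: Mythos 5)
Your proof is correct and follows essentially the same route as the paper: use Claim~\ref{Acyclic} to get that $C$ is a tree, Claim~\ref{Path5} to bound its diameter by three, and the degree bound to conclude $|C|\le 6$. The extra detail you supply (the explicit star/double-star classification) is a fine, if slightly more verbose, way of executing the final counting step.
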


\begin{proof}
By Claim~\ref{Acyclic}, $C$ is a tree. 
By Claim~\ref{Path5}, $C$ does not contain a path on five vertices. 
It follows that $C$ has diameter at most three. 
As all vertices in $C$ have degree at most three, we find that $|C|\le 6$. 
\end{proof}

\section{Discharaging}

In this section, we prove Theorem~\ref{Main}.

\begin{definition}
Define the \emph{charge} of a vertex $v$ of $G$, denoted by $ch(v)$, as follows:

$$\ch(v) = 5-\frac{3}{2}d(v),$$
where $d(v)$ denotes the degree of $v$.
\end{definition}
 
Thus if $d(v)=3$, then $\ch(v)=+\frac{1}{2}$ and similarly if $d(v)=4$, then $\ch(v)=-1$. Note that 

$$\sum_v \ch(v) = p(G)+T(G).$$

We will show that $\sum_v \ch(v) \le 0$ using the following discharging rule:

\begin{definition}[Discharging Rule]
Let $v$ be a vertex of degree three in $G$ and $C$ be the component of $D_3(G)$ containing $v$.

\begin{enumerate}
\item If $|C|=1$, then $v$ sends $+\frac{1}{6}$ charge to each neighbor.
\item If $|C|=2$, then $v$ sends $+\frac{1}{4}$ charge to each neighbor of degree at least four.
\item If $C$ has diameter two, then $v$ sends to each neighbor of degree at least four

\begin{enumerate}
\item $+\frac{1}{6}$ charge if $v$ is a non-leaf vertex,
\item $+\frac{1}{3}$ charge if $v$ is a leaf vertex. 
\end{enumerate}

\item If $C$ has diameter three, then $v$ sends to each neighbor of degree at least four
\begin{enumerate}
\item $+\frac{1}{4}$ charge if $v$ is a non-leaf vertex,
\item $+\frac{3}{8}$ charge if $v$ is a leaf vertex. 
\end{enumerate}

\end{enumerate}

Let $\ch^*(v)$ denote the final charge of $v$ after applying the above discharging rules.
\end{definition}

\begin{claim}
If $C$ is a component of $D_3(G)$, then $\sum_{v\in V(C)} \ch^*(v) \le 0$.
\end{claim}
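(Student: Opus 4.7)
The approach is a case analysis on the structure of $C$. By Claim~\ref{Acyclic} $C$ is a tree, by the preceding claim $|C|\le 6$, and by Claim~\ref{Path5} $C$ has diameter at most three. For each $v\in V(C)$, let $d_C(v)$ denote its degree within $C$; since $v$ has degree three in $G$, exactly $3-d_C(v)$ of its neighbors lie outside $C$, and these are precisely the neighbors of degree at least four---the only recipients of $v$'s outgoing charge under the rule.

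The cases $|C|\in\{1,2\}$ are immediate: when $|C|=1$ the lone vertex sends $3\cdot\frac{1}{6}=\frac{1}{2}$ and retains $\ch^*=0$; when $|C|=2$ each endpoint sends $2\cdot\frac{1}{4}=\frac{1}{2}$ and retains $\ch^*=0$. When $C$ has diameter two, $C$ is a star $K_{1,k}$ with $k\in\{2,3\}$ (the lower bound from the diameter condition, the upper from the degree-three constraint on the center). The center sends $\frac{1}{6}$ to each of its $3-k$ external neighbors and retains $\frac{1}{2}-(3-k)/6$, while each of its $k$ leaves sends $2\cdot\frac{1}{3}$ and retains $-\frac{1}{6}$. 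A direct computation gives $\sum_{v\in V(C)}\ch^*(v)=0$ for both $k=2$ and $k=3$.

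The main case is diameter three, where $C$ is a double star with adjacent internal vertices $u,w$, $u$ carrying $a$ additional leaves and $w$ carrying $b$ additional leaves. The degree-three constraint forces $a,b\in\{0,1\}$, giving four subcases with $|C|\in\{4,5,6\}$. An internal vertex with $i$ extra leaves has $1-i$ external neighbors, hence retains $\frac{1}{2}-(1-i)\cdot\frac{1}{4}$; each leaf has two external neighbors and retains $\frac{1}{2}-2\cdot\frac{3}{8}=-\frac{1}{4}$. Summing over each of the four subcases yields total charge exactly $0$.

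The only real obstacle is careful bookkeeping: verifying in each subcase that external neighbors are counted correctly and that the coefficients of the discharging rule (indexed by diameter and by leaf versus non-leaf) cancel. In fact the rule was manifestly designed so that every component of $D_3(G)$ balances to exactly zero, so this claim yields no slack; the deficit needed for the final contradiction in the main theorem must come instead from the degree-at-least-four vertices.
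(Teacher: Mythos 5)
Your proof is correct and follows essentially the same route as the paper's: a case analysis on $|C|$ and the diameter of $C$, verifying directly that the discharging rules make each component balance to exactly zero. The only cosmetic difference is that you name the shapes (stars and double stars) and parametrize by leaf-count, whereas the paper just lists the computations for $|C|=1,\dots,6$; the arithmetic and the reasoning are identical.
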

\begin{proof}
First suppose that $|C|=1$. Then $C$ consists of a single vertex $v$. By rule 1, $v$ sends $+\frac{1}{6}$ charge to each of its neighbors. Hence $\ch^*(v)=\frac{1}{2}-3 \cdot \frac{1}{6}=0$ as desired.

Second suppose that $|C|=2$. By rule 2, each vertex in $C$ sends $+\frac{1}{4}$ to each neighbor of degree at least four. Since each vertex in $C$ has two such neighbors, we find that $\sum_{v\in C} \ch^*(v) = 2\cdot\frac{1}{2} - 4\cdot\frac{1}{4} = 0$.

Third suppose that $C$ has diameter two. By rule 3, each non-leaf vertex sends $\frac{1}{6}$ if it has a neighbor of degree at least four, while each leaf vertex sends $+\frac{1}{3}$ to each neighbor of degree at least four (of which it has two). 
Thus if $|C|=3$, $\sum_{v\in C} \ch^*(v) = 3 \cdot \frac{1}{2} -\frac{1}{6} - 4 \cdot \frac{1}{3}=0$. 
If $|C|=4$, then $\sum_{v\in C} \ch^*(v) = 4 \cdot \frac{1}{2} - 6 \cdot \frac{1}{3} = 0$.

Finally we suppose that $C$ has diameter three. 
By rule 4, each non-leaf vertex sends $+\frac{1}{4}$ and each leaf vertex sends $+\frac{3}{8}$ to each neighbor of degree at least four. 
Thus if $|C|=4$, $\sum_{v\in C} \ch^*(v) = 4\cdot\frac{1}{2} - 2\cdot\frac{1}{4} - 4\cdot\frac{3}{8} = 0$. 
If $|C|=5$, then $\sum_{v\in C} \ch^*(v) = 5\cdot\frac{1}{2} - \frac{1}{4} - 6\cdot\frac{3}{8} = 0$. 
If $|C|=6$, then $\sum_{v\in C} \ch^*(v) = 6\cdot\frac{1}{2} - 8\cdot\frac{3}{8} = 0$ as desired.
\end{proof}

\begin{claim}
If $v$ is a vertex of $G$ with $d(v)\ge 4$, then $\ch^*(v)\le 0$.
\end{claim}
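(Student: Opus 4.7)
The plan is to split on $d(v)$, handling large degrees directly and then doing the bulk of the work in the case $d(v)=4$.

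For $d(v)\ge 5$, each of the at most $d(v)$ degree-$3$ neighbours of $v$ contributes at most $\tfrac{3}{8}$ to its final charge, so
\[
\ch^*(v)\le 5-\tfrac{3}{2}d(v)+\tfrac{3}{8}d(v)=5-\tfrac{9}{8}d(v)\le -\tfrac{5}{8}<0.
\]

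For $d(v)=4$, where $\ch(v)=-1$, I need to show the total charge received is at most $1$. Let $u_1,\dots,u_k$ be the degree-$3$ neighbours of $v$; by Claim~\ref{OneInATriangle} they are pairwise non-adjacent. Call $u_i$ \emph{high} if it sends strictly more than $\tfrac{1}{4}$ to $v$---equivalently, if $u_i$ is a leaf in a component of $D_3(G)$ of diameter $\ge 2$ containing at least three vertices---and \emph{low} otherwise. When $k\le 2$, or when $k\in\{3,4\}$ and every $u_i$ is low, the received charge is at most $\tfrac{3}{4}$ or $1$ respectively, so the interesting case is $k\in\{3,4\}$ with at least one high $u_i$. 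For each high $u_i$, letting $p_i$ denote its component-neighbour in $D_3(G)$ and $x_i$ its third neighbour (which must have degree $\ge 4$), Claim~\ref{Gadget} applies (with $u=u_i$ and the claim's ``$v$''$=p_i$) and yields one of two alternatives: the \emph{edge alternative} $vx_i\in E(G)$, or the \emph{gadget alternative} that $v$ and $x_i$ lie in an $H_7$-gadget $H_i$ with end $u_i$ avoiding $p_i$.

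The edge alternative forces $x_i$ to be a non-degree-$3$ neighbour of $v$, which is vacuous when $k=4$; when $k=3$, at most one high $u_i$ can be in the edge alternative, for two such with $x_i=x_j=y$ (the unique non-degree-$3$ neighbour of $v$) would make $\{v,u_i,u_j,y\}\cong K_4-e$, contradicting Claim~\ref{NoK4e}. In the gadget alternative, Figure~\ref{GraphH7} shows that $v$ occupies the $a$-position of $H_i$ and that $v$'s two in-gadget neighbours other than $u_i$ are the ``corner'' vertices, each sitting in one of the two vertex-disjoint triangles of the gadget. If such a corner is a degree-$3$ vertex of $G$ (hence some $u_j$), Claim~\ref{OneInATriangle} forces its two triangle-partners to have degree $\ge 4$, so $u_j$ is isolated in $D_3(G)$ and contributes only $\tfrac{1}{6}$ of charge.

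The hardest step I foresee is ruling out the configuration in which two high neighbours $u_i,u_j$ are both in the gadget alternative. For $k=4$ this is the only possible two-high scenario, and for $k=3$ the edge+edge scenario has already been excluded while edge+gadget directly yields received charge at most $\tfrac{3}{8}+\tfrac{3}{8}+\tfrac{1}{6}=\tfrac{22}{24}<1$. In the two-gadget configuration, since high neighbours cannot sit at corner positions, the corners of $v$ in $H_i$ and in $H_j$ must coincide as a set of at most two vertices; in either of the two possible relative labellings of these shared corners, a direct edge count gives that $H_i\cup H_j$ has exactly $11$ vertices, at least $17$ edges, and at most $2$ vertex-disjoint cycles of length $\le 4$, so
\[
p(H_i\cup H_j)\le 5\cdot 11-3\cdot 17-2=2.
\]
Since the corner vertices of both gadgets have degree $\ge 4$ in $G$, and $p_i\ne u_j$ because $u_i\not\sim u_j$, the degree-$3$ vertex $p_i$ lies outside $V(H_i\cup H_j)$; hence $V(H_i\cup H_j)$ is a proper subset of $V(G)$ and Claim~\ref{Pot4} gives $p_G(V(H_i\cup H_j))\ge p(G)+4\ge 3$, contradicting $p_G(V(H_i\cup H_j))\le p(H_i\cup H_j)\le 2$. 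With this configuration eliminated, the remaining worst case gives received charge at most $\tfrac{3}{8}+2\cdot\tfrac{1}{6}+\tfrac{1}{4}=\tfrac{23}{24}<1$, so $\ch^*(v)\le 0$.
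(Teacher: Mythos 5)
Your proof takes a genuinely different route from the paper's. The paper splits into cases according to which discharging rule (3(b) or 4(b)) fires at $v$, and in the 4(b) case it applies Claim~\ref{Gadget} with $u$ equal to a \emph{middle} vertex $u_2$ of a path $u_1u_2u_3u_4$ in $D_3(G)$, so that $v$ (adjacent to $u_1$) lands at a \emph{corner} of the resulting $H_7$-gadget and is therefore in a triangle; the rest of the paper's 4(b) argument then works around that triangle (and the paper's text in that case is admittedly rough, containing an unresolved ``\texttt{??}''). You instead classify degree-3 neighbours as high or low, apply Claim~\ref{Gadget} with $u$ equal to the high neighbour itself, which consistently puts $v$ at the $a$-position of the gadget in every case, and then reduce everything to ``edge alternative'' vs.\ ``gadget alternative.'' This unification is cleaner than the paper's split, and your handling of the edge alternative (via $K_4-e$) and of the charge accounting for corners that are isolated in $D_3(G)$ is correct.

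However, the two-gadget step contains genuine gaps. First, the inequality $p(H_i\cup H_j)\le 5\cdot 11-3\cdot 17-2$ requires a \emph{lower} bound $T(H_i\cup H_j)\ge 2$, not ``at most $2$''; the needed fact is true (the two disjoint triangles of $H_i$ already give $T\ge 2$), but the direction as written is wrong. Second, ``exactly $11$ vertices, at least $17$ edges'' is only established in the case where all identifications between $H_i$ and $H_j$ are the minimal forced ones. You have not ruled out $x_i=x_j$: if the two gadget ends coincide, a quick count gives $10$ vertices and $15$ edges, so your bound becomes $p\le 50-45-2=3$, which does \emph{not} contradict Claim~\ref{Pot4} ($\ge 3$). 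That case can be salvaged — when $x_i=x_j$ the four vertices $u_i,v,u_j,x_i$ span a $4$-cycle disjoint from both corner triangles, so $T\ge 3$ and $p\le 2$ again — but this requires a separate observation you have not made. Third, for $k=3$ one of the two shared corners is the degree-$\ge4$ vertex $y$, whose gadget-triangle partners need not coincide between $H_i$ and $H_j$; then $|V(H_i\cup H_j)|$ can be $13$ rather than $11$. (That case also still yields $p\le 2$, but again by a different count.) Until these configurations are enumerated and bounded, the claim ``$p(H_i\cup H_j)\le 2$'' is not established, and with it the elimination of the two-gadget scenario — which you correctly identify as the crux — remains incomplete.
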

\begin{proof}
Suppose that there exists a vertex $v$ with $d(v) \geq 4$ and $\ch^*(v)>0$.
Note that $v$ receives at most $\frac{3}{8}$ from each neighbor of degree three. Thus if $d(v)\ge 5$, $\ch^*(v) \le 5-\frac{3}{2}d(v) +\frac{3}{8}d(v) = 5-\frac{9}{8}d(v) <0$. So $d(v)=4$. 

Since $\ch(v)=-1$ and $v$ receives at most $+\frac{3}{8}$ charge from each neighbor of degree three it follows that $v$ has at least three neighbors of degree three.
In addition, $v$ receives charge from a vertex under at least one of 3(b) or 4(b). 

So first suppose that $v$ receives charge under rule 4(b). 
Then there exists a path $u_1u_2u_3u_4$ in $D_3(G)$ such that $v$ is adjacent to $u_1$.
Applying Claim \ref{Gadget} by taking $u=u_2$, $a=u_1$, and $v=u_3$, we have that $a$ and $u$ are in a triangle, contradicting Claim~\ref{OneInATriangle}, or an $H_7$-gadget with end $u_2$. But then $v$ is in that $H_7$-gadget and thus in a triangle $T$.

By Claim \ref{OneInATriangle}, $v$ has at least one neighbor of degree at least four. Since $v$ has at least three neighbors of degree three, it follows that the other neighbor $w$ in $T$ has degree three. 
If $w$ is not in the same component of $D_3(G)$ as $u_1$, then it follows that $w$ is in a component of $D_3(G)$ of size at most two. 
Hence $v$ receives at most $+\frac{1}{4}$ from $w$. But then $\ch^*(v)\le -1 + \frac{1}{4} + 2\cdot\frac{3}{8} = 0$, a contradiction.
So $w$ is in the same component of $D_3(G)$ as $u_1$. Thus $w$ is adjacent to $u_3$??

Applying Claim \ref{Gadget} by taking $u=u_3$, $a=w$ and $v=u_2$, we have that $v$ and $u_1$ are in some triangle and $w$ and $u_1$ are the only neighbors of $v$ of degree three, since every component of $D_3(G)$ has diameter at most three.
It is a contradiction.

So we may suppose that $v$ does not receive charge under rule 4(b) and hence $v$ receives charge under rule 3(b). 
Since $v$ receives at most $+\frac{1}{3}$ charge from each neighbor of degree three, it follows that $v$ has four neighbors of degree three. 
Thus $v$ is not in a triangle by Claim~\ref{OneInATriangle}. 
Hence $v$ is in an $H_7$-gadget whose end is a neighbor of $v$ by Claim~\ref{Gadget}. 
Thus two of its neighbors are in triangles. 
Since these neighbors are degree three, it follows from Claim~\ref{OneInATriangle} that they are in components of $D_3(G)$ of size one. 
Hence $v$ receives only $+\frac{1}{6}$ charge from these neighbors and $\ch^*(v) \le -1 + 2\cdot\frac{1}{6} + 2\cdot\frac{1}{3} = 0$ as desired.
\end{proof}

If follows from the above claims that 

$$p(G)+T(G) = \sum_v \ch^*(v)\le 0.$$

Since $G$ is a minimum counterexample, $p(G)\ge -1$. 
This implies that $T(G)\le 1$. 
Hence $G$ does not contain an $H_7$-gadget.
Therefore, every component of $D_3(G)$ has diameter at most two by Claims~\ref{OneInATriangle} and \ref{Gadget}. 

Suppose there exists a component with diameter two. 
It follows from Claim~\ref{Gadget} that $G$ has two triangles $T_1,T_2$, where each of them contains a leaf of the component. So $T(G)= 1$. Since $T(G)\le 1$, $T_1$ and $T_2$ intersect in at least one vertex. 
Yet $T_1$ and $T_2$ intersect in at most one vertex since $G$ does not contain a $K_4-e$ by Claim~\ref{NoK4e}. 
Let $v$ be the unique vertex contained in both $T_1$ and $T_2$. 
By Claim~\ref{OneInATriangle}, $v$ has at least two neighbors of degree at least four. 
Since discharging rule (4) no longer applies, $v$ receives at most $\frac{1}{3}$ charge from its neighbors of degree three. 
But then $\ch^*(v) \le 5-\frac{3}{2}d(v) + \frac{d(v)-2}{3} = \frac{13}{3} - \frac{7}{6}d(v)$. 
As $d(v)\ge 4$, it follows that $\ch^*(v)\le -\frac{1}{3}$. 
Since $\sum_v \ch^*(v)$ is integral, we find that $\sum_v \ch^*(v)\le -1$ and hence $p(G)\le -1 -T(G) = -2$, a contradiction.

So every component of $D_3(G)$ is an edge or vertex. 
Note in this case that discharging rules 3 and 4 do not apply. Let $m$ be the number of edges with both ends of degree at least four. Then $p(G)\le \sum_v ch^*(v) \le \sum_{v\in V(G)-V(D_3(G))} ch^*(v) \le -m/2 +\sum_v( 5-3d(v)/2 +d(v)/4 ) \le -m/2$. So $m\le 2$ as otherwise, $p(G) \leq \sum_v \ch(v) <-1$, a contradiction. Furthermore if $m=2$, then all the equalities hold. In that case, $ch^*(v)=0$ for all $v\in D_3(G)$ and hence every component of $D_3(G)$ is an edge.

First suppose $m=0$. Then we can proper color $G$ by three colors by coloring $G\setminus D_3(G)$ with one color and coloring $D_3(G)$, which is bipartite, with two colors, a contradiction. 
Next suppose $m=1$ and let $f=u_1u_2$ be the unique edge between vertices of degree at least four. 
Then $G[D_3(G) \cup \{u_1\}]$ is bipartite by Claim \ref{OneInATriangle}.
So we can obtain a proper $3$-coloring of $G$ by coloring $G\setminus (D_3(G)\cup\{u_1\})$ with one color and $G[D_3(G)\cup\{u_1\}]$ with two colors, a contradiction. 
Consequently, $m=2$ and there are exactly two edges $f_1=u_1u_2$ and $f_2=u_3u_4$ between vertices of degree at least four. 

Recall that every component of $D_3(G)$ is an edge in this case. We may suppose without loss of generality that $u_1\ne u_3$. 
Now we color $G\setminus (D_3(G)\cup\{u_1,u_3\})$ with color 1, $u_1$ with color 2 and $u_3$ with color $3$. 
Then we can extend this coloring to a coloring of $D_3(G)$ as follows. Let $vw$ be an edge of $D_3(G)$. If at most one of $v$ or $w$ in $N(u_1)\cup N(u_3)$, color that vertex different from its colored neighbors and then extend the coloring to the other vertex, which is possible since it has two available colors (2 and 3). So suppose that both $v$ and $w$ are in $N(u_1)\cup N(u_3)$. Since $v$ and $w$ are not in triangle together, we may suppose without loss of generality that $v\in N(u_1)\setminus N(u_3)$ and $w\in N(u_3)\setminus N(u_1)$. Now color $v$ with color 3 and $w$ with color 2. In this way the coloring can be extended to all the components of $D_3(G)$ and hence $G$ has a $3$-coloring, a contradiction. This proves Theorem~\ref{Main}.

\section{Concluding remarks}
One may wonder if the asymptotic edge-density of $4$-critical graphs may be improved above $5/3$. The second author~\cite{Postle} confirmed this by proving the following theorem:

\begin{theorem}[Postle~\cite{Postle}]\label{Ore4Better}
There exists $\epsilon, t> 0$ such that if $G$ is a $4$-critical graph, then 

$$|E(G)| \ge \frac{(5+\epsilon)|V(G)| - 2 + \epsilon(t-4-tT(G))}{3}$$
\end{theorem}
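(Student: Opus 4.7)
The plan is to extend the potential technique of Theorem~\ref{Main} using a refined potential that incorporates a small positive parameter $\epsilon$. Define
$$p_{\epsilon,t}(G) = (5+\epsilon)|V(G)| - 3|E(G)| - \epsilon t\, T(G),$$
so that the target inequality is equivalent to $p_{\epsilon,t}(G) \le 2 + \epsilon(4-t)$, to be established for all $4$-critical $G$ for suitable constants $\epsilon,t > 0$. The overall strategy mirrors the proof of Theorem~\ref{Main}: analyze a minimum counterexample by critical extensions, then finish via discharging. The new feature is that $\epsilon$ provides extra slack which must be balanced against the exceptional base cases.

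The first step is to verify the bound on exceptional $4$-critical graphs. Every $4$-Ore graph $H$ satisfies $5|V(H)|-3|E(H)| = 2$, so the desired inequality reduces to $|V(H)| \le 4 + t(T(H)-1)$. I would establish this by induction on Ore-compositions using Proposition~\ref{TOre} and Corollary~\ref{TOre3}, showing that $T(H)$ grows sufficiently quickly as $|V(H)|$ grows (each composition with $K_4$ adds three vertices, and repeated application of the corollary forces at least one additional triangle in the packing beyond a bounded depth). This selects $t$ large enough to handle all $4$-Ore graphs; the finitely many remaining exceptional graphs $W_5, T_8, T_{11}$ then yield explicit upper bounds on $\epsilon$.

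The second step is to reprove the structural claims of Section~4 for $p_{\epsilon,t}$. The analog of Lemma~\ref{Extension} reads
$$p_{\epsilon,t}(R') \le p_{\epsilon,t}(R) + p_{\epsilon,t}(W) - \bigl[(5+\epsilon)|X| - 3\tbinom{|X|}{2}\bigr] + \epsilon t\bigl(T(W)-T(W\setminus X)\bigr),$$
and since $T(W)-T(W\setminus X) \le |X|$, the right-hand side degrades by only $O(\epsilon|X|)$ compared to Lemma~\ref{Extension}. With $\epsilon$ chosen small, Claims~\ref{NoK4e}, \ref{Pot4}, \ref{NoIdPair}, \ref{OneInATriangle}, \ref{Acyclic}, \ref{Gadget} and \ref{Path5} all persist with essentially the same proofs; the most delicate is the $H_7$-gadget analysis in Claim~\ref{Gadget}, where the tight potential accounting must be redone carefully and some new gadget configurations may have to be excluded.

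The final step is the discharging: assign $\ch(v) = (5+\epsilon) - \tfrac{3}{2}d(v)$, so $\sum_v \ch(v) = p_{\epsilon,t}(G) + \epsilon t\, T(G)$ and the target becomes $\sum_v \ch^*(v) \le 2 + \epsilon(4-t) + \epsilon t\, T(G)$. Each vertex of degree three carries an additional $+\epsilon$ of charge beyond the discharging in Section~5, so the per-rule calculations must be retuned; the modified rules still send all charge away, but the residual slack is bounded by the $\epsilon$-adjusted right-hand side. The main obstacle is the tension between $\epsilon$ and $t$: one needs $t$ large to absorb large $4$-Ore graphs, yet $\epsilon t$ small enough that the extension lemma still loses a positive constant at each step, and simultaneously $\epsilon$ must be small enough that the final discharging inequality tightens rather than loosens. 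Threading this compatibility condition, together with refining the structural lemmas so they no longer depend on exact values of $p(G)$ but tolerate $O(\epsilon)$ perturbations, is the crux of the argument.
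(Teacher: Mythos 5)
This theorem is not proved in the paper you were given: it appears only in the concluding-remarks section and is cited from Postle's separate manuscript \cite{Postle}. So there is no ``paper's own proof'' to compare against; what follows is an assessment of your proposal on its own terms.

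Your reformulation is correct: setting $p_{\epsilon,t}(G) = (5+\epsilon)|V(G)| - 3|E(G)| - \epsilon t\, T(G)$, the target inequality is exactly $p_{\epsilon,t}(G) \le 2 + \epsilon(4-t)$, and for a $4$-Ore graph $H$ (where $5|V(H)|-3|E(H)|=2$) this reduces to $|V(H)| \le 4 + t\bigl(T(H)-1\bigr)$. The weighted-potential framework, the modified extension lemma, and the general shape of the discharging are all reasonable. But the step you describe as your ``first step'' is the genuine crux, and you have not actually proved it. You are asserting that $T(H)$ grows linearly in $|V(H)|$ over the entire (infinite) family of $4$-Ore graphs, ``by induction on Ore-compositions using Proposition~\ref{TOre} and Corollary~\ref{TOre3}.'' Those two results only give \emph{lower} bounds on $T(H)$ in terms of $T(H_1),T(H_2)$, and naive compounding is not enough: composing with $K_4$ as the edge-side adds $3$ vertices to $|V|$ while Proposition~\ref{TOre} guarantees only $T(H)\ge T(H_2)$, so the inductive hypothesis $|V(H_i)|\le 4 + t(T(H_i)-1)$ does not by itself propagate. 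Ruling out that $T$ stagnates while $|V|$ grows requires a structural argument about maximum $\le 4$-cycle packings in $4$-Ore graphs (roughly, that each constituent $K_4$ or diamond in the Ore-decomposition tree contributes a private short cycle except in boundedly many configurations), and this argument is where the real work --- and the determination of the constant $t$ --- lives. Your parenthetical ``repeated application of the corollary forces at least one additional triangle in the packing beyond a bounded depth'' names the right intuition but supplies none of the substance.

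A secondary soft spot: you note that the $H_7$-gadget analysis in Claim~\ref{Gadget} is delicate, but you should also flag that once $\epsilon>0$, the structural claims no longer rule out \emph{all} low-potential subsets --- the potential threshold becomes $2+\epsilon(4-t)$ rather than an integer, so the exhaustive case analysis on exceptional extenders ($K_4$, $H_7$, $W_5$, $T_8$, $T_{11}$, $4$-Ore with $T=3$) no longer closes by one-unit margins. You would need to recompute the slack in each branch of Claims~\ref{NoK4e}--\ref{Path5} and verify that the choice of $\epsilon$ is compatible with all of them simultaneously, and in particular that no new near-tight configurations enter. This is the ``threading the compatibility condition'' you mention, and again it is stated as a goal rather than executed.
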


When $G$ is girth at least five, Theorem~\ref{Ore4Better} provides the following corollary.

\begin{corollary}[Postle~\cite{Postle}]\label{Ore4GirthFive}
There exists $t,\epsilon> 0$ such that if $G$ is a $4$-critical graph of girth at least five, then 

$$|E(G)| \ge \frac{(5+\epsilon)n - 2 + (t-4)\epsilon}{3}$$
\end{corollary}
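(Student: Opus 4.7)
The plan is to derive Corollary~\ref{Ore4GirthFive} as an immediate consequence of Theorem~\ref{Ore4Better} by evaluating the formula at graphs of girth at least five. The only non-trivial ingredient is a short observation about the parameter $T(G)$ in this setting.

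First I would observe that if $G$ has girth at least five, then $G$ contains no cycles of length three or four. Since $T(G)$ is defined as the maximum number of vertex-disjoint cycles of length at most four, this forces $T(G)=0$.

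Next I would substitute $T(G)=0$ into the bound given by Theorem~\ref{Ore4Better}. This turns
\[
|E(G)| \ge \frac{(5+\epsilon)|V(G)|-2+\epsilon(t-4-tT(G))}{3}
\]
into
\[
|E(G)| \ge \frac{(5+\epsilon)n-2+(t-4)\epsilon}{3},
\]
which is exactly the statement of the corollary, so the same constants $\epsilon$ and $t$ that work in Theorem~\ref{Ore4Better} work here.

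There is no real obstacle; the only thing worth flagging is the sign of $t-4$. For the bound to be a genuine improvement over Theorem~\ref{Ore4} one wants both $\epsilon>0$ and a useful constant term, but neither positivity nor any special sign condition on $t-4$ is needed for the inequality itself to hold, since it is just the specialization $T(G)=0$ of an already-proved inequality.
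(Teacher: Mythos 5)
Your proposal is correct and matches the paper's (implicit) derivation: since girth at least five rules out all $3$- and $4$-cycles, $T(G)=0$, and substituting this into the bound of Theorem~\ref{Ore4Better} gives the corollary directly. The paper states the corollary without proof precisely because it is this immediate specialization.
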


Corollary~\ref{Ore4GirthFive} implies that for large $4$-critical graphs of girth at least five, the number of edges differs greatly from $\frac{5}{3}|V(G)|$. However, Theorem~\ref{Ore4GirthFive} does not imply our main result. Nevertheless we believe the two theorems could be merged to provide one unified theorem as well as better value for $\epsilon$.

On the other hand, the condition in Corollary~\ref{Ore4GirthFive} for girth five cannot be replaced by girth four. A construction of Thomas and Walls~\cite{} using Ore-compositions shows that the asymptotic density is 5/3 for triangle-free 4-critical graphs. It would be of interest to answer the following question then:

\begin{question}
What is the minimum $c$ such that there exists a triangle-free $4$-critical graph on $n$ vertices with $\frac{5n+c}{3}$ edges?
\end{question}

Kostochka and Yancey's bound shows that $c\ge -2$. Our main result however cannot improve this since $T(G)$ counts $4$-cycles. The proof would have to be modified to avoid this pitfall. The best upper bound we know is $c\le 5$ as evidenced by the Mycelski graph on 11 vertices. We conjecture that this is correct. Similarly it would be of interest to find the minimum for graphs of girth five:

\begin{question}
What is the minimum $c$ such that there exists a $4$-critical graph of girth five on $n$ vertices with $\frac{5n+c}{3}$ edges?
\end{question}

Our main result implies that $c\ge 2$, but we think the number should be higher. On the other hand, since there exists a 4-critical 4-regular graph on 21 vertices (the so-called Grunbaum graph), $c\le 21$. We believe that with further work, our methods should solve this question. 
Namely, by ``digging deeper" in the list of graph potentials and categorizing the graphs of potentials $-2, -3, \ldots$, one should be able to find the best $c$. Of course the list of such graphs would become more numerous but is still finite. On the other hand, the discharging part of our proof would have to be strengthened and new analysis developed to show that the sum of the charges is at most the negative of that best possible $c$.

\end{document}